\documentclass[11pt,a4paper]{article}

\usepackage[utf8]{inputenc}
\usepackage[T1]{fontenc}
\usepackage{lmodern}
\usepackage{amsmath,amssymb,amsthm,mathtools,bm}
\usepackage[a4paper,margin=1in]{geometry}
\usepackage{graphicx}
\usepackage{booktabs}
\usepackage{array}
\usepackage{float}
\usepackage{enumitem}
\usepackage[round,authoryear]{natbib}
\usepackage[dvipsnames]{xcolor}
\usepackage{hyperref}
\hypersetup{colorlinks=true,linkcolor=MidnightBlue,citecolor=MidnightBlue,urlcolor=MidnightBlue}
\usepackage{microtype}
\usepackage{setspace}

\theoremstyle{plain}
\newtheorem{theorem}{Theorem}[section]
\newtheorem{proposition}[theorem]{Proposition}
\newtheorem{corollary}[theorem]{Corollary}
\newtheorem{lemma}[theorem]{Lemma}
\theoremstyle{definition}
\newtheorem{assumption}[theorem]{Assumption}

\newtheorem{remark}[theorem]{Remark}

\newcommand{\R}{\mathbb{R}}
\newcommand{\Sym}{\mathbb{S}}
\newcommand{\PSD}{\Sym_{+}}
\newcommand{\PD}{\Sym_{++}}
\newcommand{\tr}{\operatorname{tr}}
\newcommand{\diag}{\operatorname{diag}}
\DeclareMathOperator*{\argmin}{arg\,min}

\DeclareMathOperator{\rank}{rank}

\title{\textbf{Hidden Convexity in Multi-Retailer Cold-Chain Allocation: A Cholesky-Spectral Second-Order Cone Programming Approach}\\ Dedicated to Professor Didier Aussel in honor of his 60th birthday.
}

\author{%
  Pham Nguyen Hoang Thinh\footnotemark[1] \qquad Boris S. Mordukhovich\footnotemark[2]\\[10pt]
  Hoang Truong Son\footnotemark[3] \qquad Tran Anh Ngoc \qquad Le Phuong Quyen
}
\date{\today}

\begin{document}
\sloppy
\maketitle
\footnotetext[1]{Department of Mathematics, Wayne State University, Detroit, MI, 48202, email: \texttt{thinhpham@wayne.edu}.}%
\footnotetext[2]{Department of Mathematics and Institute for Artificial Intelligence and Data Science, Wayne State University, Detroit, Michigan, USA. E-mail: aa1086@wayne.edu. Research of this author was partly supported by the US National Science Foundation under grant DMS-2204519 and by the Australian Research Council under Discovery Project DP250101112.} %
\footnotetext[3]{Foreign Trade University Ho Chi Minh City Branch, HCMC, Vietnam, email: \texttt{k63.2412535035@ftu.edu.vn}.}%

\begin{abstract}
\noindent Several dairy retailers share a refrigerated cold-chain
network. Each must allocate products across heat-sensitivity subgroups
and planning periods, maximising a quadratic-fractional efficiency in
the spirit of data envelopment analysis (DEA, Charnes, Cooper and
Rhodes, 1978); the fractional form captures spoilage risk per unit of
logistics cost and is therefore strictly more informative than a
linear additive efficiency, which would conflate scale with intensity.
This aligns with the DEA convention of separating technical from scale
efficiency (Banker, Charnes and Cooper, 1984): a linear score cannot
distinguish a large-volume retailer allocating uniformly from a
small-volume retailer concentrated on high-risk items, whereas the
ratio isolates per-unit risk intensity, which is the actionable
dimension for cold-chain investment (Cook and Seiford, 2009).
The individual problem is a non-convex fractional
quadratically-constrained quadratic program (QCQP); the joint problem
is a generalised Nash equilibrium problem (GNEP) with a shared
risk-budget constraint, a class that is generally NP-hard. The
Arrhenius spoilage law with coefficient $Q_{10}$ (van Boekel, 2022)
forces the thermal-risk covariance to inherit a rank-one-plus-diagonal
structure. This physical structure unlocks hidden convexity, which our
algorithm HSDI (Hidden SOCP Dinkelbach Iteration) exploits via a joint
Cholesky-spectral transform. Our contributions are fourfold. We
establish a simultaneous diagonalisation of $(\Sigma_i,H_i)$ without
commutativity. We prove an explicit sufficient condition for the
second-order cone programming (SOCP) relaxation to be exact, with a
closed-form suboptimality bound when it is not. We provide a complete
convergence analysis of HSDI: the outer Dinkelbach loop converges
globally and $Q$-superlinearly when the relaxation is exact and at
least $R$-linearly under inexact relaxation, while the coupled
variational-equilibrium (VE) iteration that enforces the shared
risk budget converges $R$-linearly under strong monotonicity, yielding
an overall $O(nd^{3.5}\log^{3}(1/\varepsilon))$ complexity; we further
establish Lipschitz stability of the primal--dual pair
$(x^{\star},\mu)$, drawing on the second-order variational-analysis
theory of \citet{Mordukhovich2024}. We calibrate the model
on Vietnam and United States dairy markets and benchmark HSDI against
multi-start non-linear baselines. HSDI attains the best baseline
value within $5.5{\times}10^{-3}$ on every instance without random
restarts, and returns the global optimum under the exactness condition;
the top eigenvalue of
$\Sigma_i$ captures over $99\%$ of the spectrum; the empirical
Dinkelbach and VE iterations both exhibit the predicted linear rates;
and the empirical scaling exponent $\hat\gamma_{\mathrm{HSDI}}=0.93$
(95\% CI $[0.81,1.03]$) lies below $\hat\gamma_{\mathrm{SLSQP}}=1.30$
(95\% CI $[1.04,1.62]$). Cold-chain operators thus gain a
polynomial-time, convergence-certified tool and a Lipschitz-stable
quantitative shadow price that operators and policymakers can use as a
direct threshold for prioritising cold-chain capacity investment in
markets where thermal noise is binding.
\smallskip

\noindent\textbf{Keywords:} hidden convexity; second-order cone
programming; generalized Nash equilibrium; variational equilibrium;
fractional efficiency; Dinkelbach algorithm; cold supply chain.
\end{abstract}

\section{Introduction}\label{sec:intro}

This paper is devoted to applications of variational analysis to practical modelling, the area where fundamental contributions of Professor Aussel are difficult to overstate. 
\\Cold supply chains for fluid milk are among the highest fixed-cost
infrastructures in food retail, with global cold-storage capacity
exceeding $719$ million $\mathrm{m}^{3}$ across $51$ countries in the
most recent survey of \citet{GCCA2020}, and the GCCA member network
alone reaching $\sim 209$ million $\mathrm{m}^{3}$ by 2024
\citep{GCCA2024}. In emerging markets such as Vietnam, where logistics
performance has been shown to directly affect trade and distribution
outcomes \citep{VuDoan2024}, cold-chain infrastructure constraints are
particularly binding for domestic food retailers. Three structural features make the allocation problem
non-trivial. First, every product in the network shares a single
temperature profile, so any thermal shock (power outage, customs delay,
insulation failure) becomes a strongly positive externality across
retailers. Second, spoilage kinetics depend non-linearly on temperature
through an Arrhenius coefficient $Q_{10}\in[2,3]$
\citep{vanBoekel2022}, amplifying small temperature deviations into
large value-at-risk. Third, when several retailers liquidate inventory
after a shock, market impact \citep{AlmgrenChriss2001,IancuTrichakis2014}
makes each agent's problem a component of an equilibrium with strategic
interactions.

\paragraph{From Arrhenius physics to hidden convexity.}
The Arrhenius temperature-dependence law states that spoilage rates
multiply by $Q_{10,k}$ for every $10^\circ$C of warming, with the
coefficient $Q_{10,k}$ depending only on the heat-sensitivity subgroup
$k$. Linearising this exponential law around the nominal cold-chain
temperature with a thermal noise of standard deviation $\sigma$
produces subgroup loadings $b_k \propto \ln Q_{10,k}\cdot\sigma$ that
share the same scalar shock $\sigma$. The thermal-risk covariance
matrix of any retailer therefore inherits a rank-one-plus-diagonal
structure $\Sigma_i = b_ib_i^{\top} + D_i$, with the rank-one factor
encoding the shared thermal shock and $D_i$ capturing
subgroup-specific idiosyncratic risk. We exploit precisely this
algebraic structure to expose \emph{hidden convexity}: through a
joint Cholesky-spectral transform, the non-convex fractional QCQP
admits a tractable SOCP reformulation that is exact under an explicit,
verifiable condition---recovering the global optimum in polynomial
time on that regime---and carries a closed-form optimality-gap
certificate otherwise. This Arrhenius-driven structural
identification is the central methodological insight of the paper.

These features motivate a \emph{quadratic-fractional} efficiency
function with a physically anchored risk structure, embedded in the
variational equilibrium (VE) framework of \citet{Rosen1965}. We
address two questions. \emph{(M)} How can we quantify efficiency that
captures both economic value and physical risk in the DEA language of
\citet{CCR1978}, while remaining compatible with shared constraints?
\emph{(C)} The proposed efficiency is a non-convex fractional QCQP.
Does the one-factor risk structure allow its reduction to a family of
convex problems solvable in polynomial time?

\paragraph{Positioning in the literature.} We situate the paper at the
intersection of four streams: (1) hidden convexity in non-convex
optimisation \citep{BenTalNemirovski2001,XiaWangSheu2015,BeckPan2017,
Ramachandran2024,Pataki2013}; (2) generalised Nash equilibrium and
variational inequalities \citep{FacchineiPang2003,FacchineiKanzow2010,
KulkarniShanbhag2012,NagurneyYuBesik2017}; (3) multi-portfolio
optimisation with market impact \citep{AlmgrenChriss2001,
IancuTrichakis2014}; and (4) spoilage kinetics and willingness-to-pay
for freshness \citep{TsirosHeilman2005,vanBoekel2022,
EndaraWiedmannAdalja2023}.

The closest reference is
\citet{AkkayaPinar2025}, who study hidden conic quadratic
representations of generalised Nash equilibrium problems motivated by
multi-investor Markowitz models. In particular, their Section~3
analyses a ratio-of-quadratics objective with a Frobenius-ball
constraint, using Dinkelbach's algorithm and a second-order cone
relaxation under the assumption that the numerator and denominator
matrices commute (so that a single orthogonal rotation diagonalises
both). Our work extends the same algorithmic template -
Dinkelbach plus SOC lifting - to vector decision variables and,
more importantly, removes the commuting-pair assumption by replacing
the joint orthogonal rotation with a Cholesky-spectral transform
(Theorem~\ref{thm:joint-diag}). In addition,
Theorem~\ref{thm:socp-exact}(b) gives an explicit closed-form
suboptimality bound when the relaxation is not exact, which to our
knowledge does not appear in \citet{AkkayaPinar2025}.

\paragraph{Contributions.}
Our contributions are fourfold. \textbf{(i) Modelling.} We propose a
quadratic-fractional efficiency function with risk matrix $\Sigma_i$
derived from Arrhenius physics, and we embed it in a GNEP with a
shared budget under the VE solution concept. \textbf{(ii) Structural
theory.} We
establish in Theorem~\ref{thm:joint-diag} a simultaneous
diagonalisation of the pair $(\Sigma_i,H_i)$ via Cholesky-spectral
steps \emph{without commutativity}. We show in
Theorem~\ref{thm:socp-exact} a sufficient exactness condition and a
closed-form error bound for the SOCP relaxation. \textbf{(iii)
Convergence theory.} In Section~\ref{sec:conv} we prove that the
outer Dinkelbach iteration of HSDI converges globally, at a
$Q$-superlinear rate when the relaxation is exact
(Theorem~\ref{thm:dink-rate}) and at least $R$-linearly otherwise, and
that the coupled VE iteration enforcing the shared budget converges
$R$-linearly under strong monotonicity
(Theorem~\ref{thm:ve-rate}); we combine these into an explicit
polynomial complexity bound and establish Lipschitz stability of the
primal--dual pair via the second-order variational-analysis toolbox of
\citet{Mordukhovich2024}. \textbf{(iv) Experiments.} We calibrate the
model on two independent regimes and run six numerical experiments that
verify existence of solutions, spectral structure, calibration
robustness, polynomial-time scaling, and---newly---the theoretical
convergence rates and shadow-price stability, across problem sizes from
$n\cdot d=45$ to $180$.

\paragraph{Organisation.} Section~\ref{sec:prelim} fixes notation and
recalls technical tools. Section~\ref{sec:model} builds the model and
the GNEP. Section~\ref{sec:hidden} contains the Cholesky-spectral
transform, the SOCP exactness theorem, and Algorithm~1.
Section~\ref{sec:conv} develops the convergence analysis of HSDI and
of the coupled VE iteration. Section~\ref{sec:ext} discusses
extensions. Section~\ref{sec:exp}
reports the six numerical experiments.
Section~\ref{sec:conc} concludes. Appendices~A and~B contain detailed
proofs and calibration tables.

\section{Notation and technical background}\label{sec:prelim}

\paragraph{Notation.} Let $n,T\in\mathbb{N}$ denote the number of
retailers and planning periods; $K$ is the number of thermal
sensitivity subgroups; $d:=KT$. Each agent $i$ chooses
$x_i\in\R^{d}_{+}$. We write $\Sym^{d}$, $\PSD^{d}$, $\PD^{d}$ for the
sets of symmetric (resp.\ positive semidefinite, positive definite)
$d\times d$ matrices; for $A\in\Sym^{d}$, $\tr(A)$,
$\|A\|_{F}=\sqrt{\tr(A^{\top}A)}$ and
$\langle A,B\rangle=\tr(A^{\top}B)$. All monetary quantities are in
USD and physical quantities in SI units.
Table~\ref{tab:notation} summarises the principal notation.

\begin{table}[h]\centering\small
\caption{Principal notation. The ``typical range'' column reports the
calibration intervals of Section~\ref{sec:exp}.}\label{tab:notation}
\begin{tabular}{l l l l}
\toprule
Symbol & Meaning & Unit & Typical range \\
\midrule
$n$ & Number of retailers & - & $\{5,10,20\}$ \\
$T$ & Number of planning periods & months & $\{3,6,12\}$ \\
$K$ & Number of thermal subgroups & - & $3$ subgroups \\
$d=KT$ & Decision dimension per agent & - & $9$-$36$ \\
$x_i\in\R^{d}_{+}$ & Allocation vector of agent $i$ & kL & $\ge 0$ \\
$\Sigma_i\in\PD^{d}$ & One-factor thermal risk matrix & USD$^{2}$ & rank-1 + diag \\
$H_i\in\PD^{d}$ & Co-allocation cost matrix & USD$^{2}$ & - \\
$b_k$ & Subgroup-$k$ factor loading & USD & $\propto\ln Q_{10,k}\cdot\sigma$ \\
$\sigma$ & Terminal-lane thermal noise std & $^{\circ}$C & $[0.1,2.0]$ \\
$Q_{10,k}$ & Arrhenius coefficient, subgroup $k$ & - & $\{2.0,2.5,3.0\}$ \\
$\varphi_i(x_i)$ & Quadratic-fractional efficiency & - & $\in(0,1)$ \\
$\rho_i^{\star}$ & Dinkelbach optimal value & - & $\in(0,1)$ \\
$R$ & Shared risk budget & USD$^{2}$ & - \\
$\mu$ & Multiplier on the shared budget & USD/unit & - \\
$T_i=L_i^{-\top}U_i$ & Cholesky-spectral transform & - & invertible \\
$\Lambda_i=\diag(\lambda_{i,j})$ & Spectrum of $L_i^{-1}\Sigma_i L_i^{-\top}$ & - & $\lambda_{i,j}\ge 0$ \\
$\mathcal{J}_{-}(\rho)$ & Index set $\{j:\lambda_{i,j}<\rho\}$ & - & - \\
\bottomrule
\end{tabular}
\end{table}

\paragraph{Cholesky and spectral decomposition.} For $H\in\PD^{d}$
there is a unique $L\in\R^{d\times d}$ lower triangular with positive
diagonal such that $H=LL^{\top}$ \citep[Prop.~4.2.3]{HornJohnson2012}.
For $S\in\Sym^{d}$ there exists an orthogonal $U\in\R^{d\times d}$ and
$\Lambda=\diag(\lambda_{1},\ldots,\lambda_{d})$ such that
$S=U\Lambda U^{\top}$.

\paragraph{Dinkelbach algorithm.} For the fractional problem
\begin{equation}\label{eq:dink-master}
\rho^{\star}=\min_{x\in\mathcal{X}}\;\frac{N(x)}{D(x)},\qquad D(x)>0\ \forall x\in\mathcal{X},
\end{equation}
the function $\psi(\rho):=\min_{x\in\mathcal{X}}\{N(x)-\rho D(x)\}$ is
concave and strictly decreasing; $\rho^{\star}$ is the unique root of
$\psi(\rho)=0$ \citep{Dinkelbach1967,Schaible1976}. The Dinkelbach
update $\rho_{k+1}=N(x_k)/D(x_k)$ with $x_k\in\argmin_x\{N-\rho_kD\}$
is exactly the Newton iteration for the equation $\psi(\rho)=0$;
whenever $\psi$ is differentiable at $\rho^{\star}$ this yields a
$Q$-superlinear rate, and a $Q$-quadratic rate when $\psi'$ is locally
Lipschitz \citep{CrouzeixFerland1991,Ibaraki1983}. We make these
statements precise for our setting in Section~\ref{sec:conv}.

\paragraph{SOCP.} The second-order cone of order $k$ is
$\mathcal{Q}^{k}=\{(t,u)\in\R\times\R^{k-1}:t\ge\|u\|_{2}\}$. An SOCP
of the form $\min c^{\top}x$ s.t.\ $\|A_j x + b_j\|_{2}\le c_j^{\top}x
+ d_j$, $Ex=f$, is solvable to accuracy $\varepsilon$ in
$O(d^{3.5}\log(1/\varepsilon))$ time \citep{NesterovTodd1997,
Lobo1998}. Beyond polynomial-time solvability, SOCPs are supported
by a well-developed second-order variational-analysis theory:
\citet{HangMordukhovichSarabi2019} establish twice epi-differentiability
of the Lorentz-cone indicator and derive error-bound estimates
together with a complete characterisation of Lagrange-multiplier
uniqueness in nonpolyhedral SOCPs, while
\citet{BenkoGfrererMordukhovich2019} obtain pointbased characterisations
of tilt-stable local minimisers under the weakest metric-subregularity
constraint qualification. These stability results provide the
theoretical backdrop for the sensitivity claims made about the shadow
price $\mu$ in Sections~\ref{ssec:e3} and~\ref{sec:conc}.

\paragraph{GNEP and variational equilibrium.} A GNEP consists of $n$
agents, each solving
$\min_{x_i\in\mathcal{X}_i(x_{-i})}f_i(x_i;x_{-i})$. With a shared
constraint $\mathcal{C}\subseteq\R^{nd}$ convex, the
\emph{variational equilibrium} of \citet{Rosen1965} is an $x^{\star}$
satisfying
\begin{equation}\label{eq:vi}
\langle F(x^{\star}),\,x-x^{\star}\rangle\ge 0\quad\forall x\in\mathcal{C},
\qquad F=(\nabla_{x_i}f_i)_{i=1}^{n}.
\end{equation}
A common Lagrange multiplier on the shared constraint gives the VE its
economic interpretability (a single shadow price) and its computational
tractability (single-variable duality) \citep{FacchineiKanzow2010,
KulkarniShanbhag2012}. For a recent variational-analytic treatment of
Nash equilibrium and minimax existence via convex analysis, generalised
differentiation and fixed-point theory, we refer to
\citet{BaoMordukhovichNam2024}, whose framework accommodates payoff
functions of the ratio-of-quadratics type used in Section~\ref{sec:model}.

\paragraph{Monotone VI solvers and linear convergence.} When the
map $F$ in~\eqref{eq:vi} is $m_F$-strongly monotone and $L_F$-Lipschitz
on the convex set $\mathcal{C}$, the projected iteration
$x^{k+1}=P_{\mathcal{C}}(x^k-\tau F(x^k))$ converges $R$-linearly with
factor $\sqrt{1-2\tau m_F+\tau^2L_F^2}<1$ for $\tau\in(0,2m_F/L_F^2)$,
and Korpelevich's extragradient variant extends this to the merely
monotone case at an $O(1/k)$ ergodic rate
\citep{Korpelevich1976,Tseng1995,FacchineiPang2003}. When the shared
constraint is scalar, as in~\eqref{eq:shared}, the VE further reduces
to a one-dimensional monotone equation in the multiplier $\mu$, whose
root a bisection or safeguarded Newton scheme locates $R$-linearly;
this is the route we take in Section~\ref{sec:conv}.

\section{Model}\label{sec:model}

\subsection{One-factor risk structure}\label{ssec:risk}

The shared cold-chain network exposes every product to a common
temperature trajectory $T(t)$. Following the Arrhenius parametrisation
of \citet{vanBoekel2022}, the spoilage rate scales as
$Q_{10,k}^{\Delta T/10}$. Linearising at $T_0$ with deviation $\sigma$
yields the loading
\begin{equation}\label{eq:loading}
b_k=\frac{\ln Q_{10,k}}{10}\,\sigma,\qquad k=1,\ldots,K,
\end{equation}
where $\sigma$ is the terminal-lane thermal noise std, common across
subgroups, while $\ln Q_{10,k}$ varies by subgroup. This is a
one-factor structure \citep{Sharpe1963,FanFanLv2008}: with
$b=(b_1,\ldots,b_K)^{\top}$ and $D=\diag(\delta_1^{2},\ldots,\delta_K^{2})$
the idiosyncratic-variance matrix,
\begin{equation}\label{eq:sigma}
\Sigma=bb^{\top}+D\;\in\PD^{K},\quad\rank(bb^{\top})=1.
\end{equation}

\paragraph{Three subgroups.} The three subgroups correspond to
standard dairy thermal-treatment classes: high-temperature short-time
pasteurisation (HTST, $Q_{10}=3.0$), extended shelf life (ESL,
$Q_{10}=2.5$), and ultra-high temperature processing (UHT,
$Q_{10}=2.0$). With $\sigma_{\mathrm{VN}}=1.0^{\circ}$C and
$\sigma_{\mathrm{US}}=0.3^{\circ}$C
(Section~\ref{ssec:calib}), the loading ratio is
$\approx 3.3\times$ and the risk-spectrum ratio is $\approx 11\times$
between the two regimes; the two calibrations therefore probe the
SOCP exactness condition (Corollary~\ref{cor:exact-regime}) under
distinct risk-loading magnitudes.

\subsection{Quadratic-fractional efficiency}\label{ssec:efficiency}

In the spirit of DEA \citep{CCR1978}, the efficiency of agent $i$ is
\begin{equation}\label{eq:phi}
\varphi_i(x_i)=\frac{x_i^{\top}\Sigma_i x_i + a_i^{\top}x_i + \alpha_i}
                    {x_i^{\top}H_i x_i + c_i^{\top}x_i + \gamma_i},
\qquad \Sigma_i,H_i\in\PD^{d}.
\end{equation}
The numerator captures spoilage value-at-risk as a function of the
allocation vector. Each summand is weighted by the output value of the
corresponding product subgroup, so that allocating to high-margin yet
heat-sensitive products carries a proportionally larger risk loading.
The denominator captures the total co-allocation cost, comprising
storage costs and market-impact costs in the sense of
\citet{AlmgrenChriss2001}. Concretely,
$H_i=H_i^{(\mathrm{store})}+\beta H_i^{(\mathrm{mkt})}$ with
$H_i^{(\mathrm{mkt})}=v_iv_i^{\top}$, and the constant $\gamma_i>0$
ensures that the denominator stays strictly positive over the feasible
set. We prefer the fractional DEA-style form over an absolute
difference because it measures \emph{relative} efficiency, namely how
much spoilage risk is incurred per unit of logistics cost. This ratio
is the operationally meaningful quantity used for cold-chain
benchmarking by industry analysts and regulators. The choice also
aligns with the DEA convention of separating technical from scale
inefficiency via a ratio, rather than an additive, score
\citep{BankerCharnesCooper1984}, and with the modern consensus in the
DEA benchmarking literature that ratio-based efficiency scores
dominate additive/slack-based scores when the goal is per-unit
performance under heterogeneous scale \citep{CookSeiford2009}.
Concretely, two retailers with identical spoilage-per-cost intensity
but different network sizes would receive identical
$\varphi_i$-scores under \eqref{eq:phi}, whereas a linear additive
score $x_i^{\top}\Sigma_i x_i-x_i^{\top}H_i x_i$ would systematically
penalise the larger retailer for scale alone.

\begin{remark}[Non-convexity]\label{rem:nonconvex}
Even with $\Sigma_i,H_i\in\PD^{d}$, $\varphi_i$ is in general not
quasi-convex, and $\min_{x_i}\varphi_i$ is a fractional QCQP that is
generally NP-hard \citep{SahinidisGrossmann1991}. This non-convexity
is the central obstacle, which Section~\ref{sec:hidden} resolves by
exploiting~\eqref{eq:sigma}.
\end{remark}

\subsection{GNEP with a shared budget and variational equilibrium}\label{ssec:gnep}

The individual feasible set is
$\mathcal{X}_i=\{x_i\in\R^{d}_{+}:A_ix_i\le u_i,B_ix_i=q_i\}$ (storage
capacity, processing throughput, demand). A common thermal shock
operating on the whole network gives the shared risk-budget constraint
\begin{equation}\label{eq:shared}
g(x):=\sum_{i=1}^{n}x_i^{\top}\Sigma_i x_i - R\le 0,\qquad R>0.
\end{equation}
The joint feasible set $\mathcal{C}=\{x\in\prod_i\mathcal{X}_i:
g(x)\le 0\}$ is convex and compact, and the GNEP reads
\begin{equation}\label{eq:gnep}
\text{find }x^{\star}\in\mathcal{C}\text{ s.t.\ }\varphi_i(x_i^{\star})
\le\varphi_i(x_i;x_{-i}^{\star})\ \forall x_i\in\mathcal{X}_i\cap
\mathcal{C}_{-i}(x_{-i}^{\star}),\ \forall i.
\end{equation}

\begin{proposition}[Existence and uniqueness of VE]\label{prop:ve-exists}
Assume $\Sigma_i,H_i\in\PD^{d}$, $\mathcal{C}$ non-empty compact
convex, and $\varphi_i$ continuous and bounded on $\mathcal{C}$ for
every $i$. Then~\eqref{eq:gnep} has at least one VE characterised
by~\eqref{eq:vi} with $F=(\nabla_{x_i}\varphi_i)_{i=1}^{n}$. If, in
addition, $F$ is strictly monotone on $\mathcal{C}$, the VE is unique.
\end{proposition}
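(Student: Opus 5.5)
The plan is to treat the VE as a solution of the variational inequality \eqref{eq:vi} with $F=(\nabla_{x_i}\varphi_i)_{i=1}^n$ on $\mathcal{C}$, and to invoke the Hartman--Stampacchia theorem for existence. First I check that $F$ is well defined and continuous on $\mathcal{C}$. Since $\gamma_i>0$ and the denominator $D_i(x_i)=x_i^{\top}H_ix_i+c_i^{\top}x_i+\gamma_i$ is assumed positive on the feasible set, continuity together with compactness of $\mathcal{C}$ gives a uniform lower bound $D_i\ge\underline{D}>0$ on $\mathcal{C}$. By continuity this bound persists on an open neighbourhood of $\mathcal{C}$. On that neighbourhood $\varphi_i=N_i/D_i$ is a ratio of quadratics with nonvanishing denominator, hence $C^\infty$. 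Its gradient is
\[
\nabla_{x_i}\varphi_i=\frac{(2\Sigma_ix_i+a_i)D_i-(2H_ix_i+c_i)N_i}{D_i^2},
\]
which is continuous, and in fact Lipschitz on $\mathcal{C}$. The boundedness hypothesis on $\varphi_i$ is then used only to guarantee that $N_i$ stays bounded, so the gradient is bounded on $\mathcal{C}$.

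Next I apply Hartman--Stampacchia (see \citet[Cor.~2.2.5]{FacchineiPang2003}): a continuous map on a nonempty compact convex set admits a solution $x^\star$ of \eqref{eq:vi}. To identify $x^\star$ as a VE in the sense of \eqref{eq:gnep}, I follow the standard route of \citet{FacchineiKanzow2010}.
\begin{itemize}
\item Fix $i$ and restrict the test points in \eqref{eq:vi} to $x=(x_i,x_{-i}^\star)$ with $x_i\in\mathcal{X}_i\cap\mathcal{C}_{-i}(x_{-i}^\star)$. This gives $\langle\nabla_{x_i}\varphi_i(x^\star),x_i-x_i^\star\rangle\ge0$, so $x_i^\star$ is a stationary point of agent $i$'s problem.
\item The VE is defined by \eqref{eq:vi}, so this stationarity is what the statement asserts: it characterises the VE. I will state explicitly that, because $\varphi_i$ need not be convex (Remark~\ref{rem:nonconvex}), the inequality in \eqref{eq:gnep} is understood in this first-order sense, or globally in the pseudo-convex case.
\end{itemize}

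For uniqueness I use the usual monotonicity argument. Suppose $x^\star$ and $y^\star$ both solve \eqref{eq:vi}. Testing each against the other and adding gives $\langle F(x^\star)-F(y^\star),x^\star-y^\star\rangle\le0$. Strict monotonicity then forces $x^\star=y^\star$.

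The main obstacle is the gap between the VI solution and genuine equilibrium in \eqref{eq:gnep} under nonconvexity of $\varphi_i$. Existence and uniqueness for the VI itself are routine. What needs care is justifying that solutions of \eqref{eq:vi} are the intended VE, so I will make the "characterised by \eqref{eq:vi}" reading explicit. A secondary point is ensuring the open-neighbourhood positivity of $D_i$, which is needed so that $F$ is defined on all of $\mathcal{C}$, including its boundary.
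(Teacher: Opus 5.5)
Your proposal is correct and takes the same route as the paper's proof in Appendix~A.3: a positive denominator makes each $\varphi_i$ smooth, so $F$ is continuous; compactness and convexity of $\mathcal{C}$ then give a solution of \eqref{eq:vi} via Facchinei--Pang; and adding the two variational inequalities contradicts strict monotonicity. Your extra care on two points goes beyond what the paper's proof addresses, since the paper simply defines the VE through \eqref{eq:vi}: you establish positivity of $D_i$ on a neighbourhood of $\mathcal{C}$, and you state explicitly that, because $\varphi_i$ is non-convex, a solution of \eqref{eq:vi} is only a first-order (stationary) equilibrium of \eqref{eq:gnep}.
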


\noindent\emph{Proof sketch.} The proof follows the standard VI
argument of \citet[Thm.~2.3.3]{FacchineiPang2003}; see
Appendix~\ref{app:proofs} for the full version. In both calibrations
of Section~\ref{sec:exp}, $\lambda_{\min}(\nabla F)>0$ at the numerical
solution, confirming local strict monotonicity.

\section{Hidden convexity and SOCP reformulation}\label{sec:hidden}

\subsection{Dinkelbach parametrisation}

Applying~\eqref{eq:dink-master} to~\eqref{eq:phi}, $\rho_i^{\star}$ is
the unique root of
\begin{equation}\label{eq:dink-i}
\psi_i(\rho)=\min_{x_i\in\mathcal{X}_i}\big\{x_i^{\top}M_i(\rho)x_i
+ r_i(\rho)^{\top}x_i + s_i(\rho)\big\}=0,
\end{equation}
where $M_i(\rho):=\Sigma_i-\rho H_i$. Note that $M_i(\rho)$ may be
indefinite at optimum because $\Sigma_i$ has the rank-1 plus diagonal
structure while $\rho H_i\in\PD^{d}$ can dominate. The
Cholesky-spectral transform below removes this difficulty.

\subsection{Joint Cholesky-spectral diagonalisation}

\begin{assumption}\label{ass:reg}
$H_i\in\PD^{d}$ and $\Sigma_i=b_ib_i^{\top}+D_i$ with $b_i\in\R^{d}$
and $D_i\in\PSD^{d}$ diagonal, for every $i=1,\ldots,n$.
\end{assumption}

\begin{theorem}[Simultaneous diagonalisation without commutativity]\label{thm:joint-diag}
Under Assumption~\ref{ass:reg}, there exist an invertible
$T_i\in\R^{d\times d}$ and
$\Lambda_i=\diag(\lambda_{i,1},\ldots,\lambda_{i,d})$ with
$\lambda_{i,j}\ge 0$ such that the change of variable
$y_i=T_i^{-1}x_i$ satisfies
\begin{equation}\label{eq:simul-diag}
x_i^{\top}H_i x_i=\|y_i\|_{2}^{2},\qquad
x_i^{\top}\Sigma_i x_i=\sum_{j=1}^{d}\lambda_{i,j}\,y_{i,j}^{2}.
\end{equation}
Explicitly, $T_i=L_i^{-\top}U_i$ with $H_i=L_iL_i^{\top}$ the Cholesky
factor and $L_i^{-1}\Sigma_i L_i^{-\top}=U_i\Lambda_i U_i^{\top}$ the
spectral decomposition.
\end{theorem}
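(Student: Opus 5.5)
The plan is a direct verification of the explicit construction in the statement. First, since $H_i\in\PD^{d}$ by Assumption~\ref{ass:reg}, the Cholesky factorisation recalled in Section~\ref{sec:prelim} gives a unique lower-triangular $L_i$ with positive diagonal and $H_i=L_iL_i^{\top}$. In particular $\det L_i>0$, so $L_i$ is invertible. Next I form $S_i:=L_i^{-1}\Sigma_iL_i^{-\top}$. This matrix is symmetric because $\Sigma_i$ is symmetric. It is also positive semidefinite, since $\Sigma_i=b_ib_i^{\top}+D_i$ is a sum of a rank-one PSD matrix and a diagonal matrix with $D_i\in\PSD^{d}$, and congruence preserves semidefiniteness: $z^{\top}S_iz=w^{\top}\Sigma_iw\ge 0$ with $w=L_i^{-\top}z$. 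The spectral theorem then gives an orthogonal $U_i$ and $\Lambda_i=\diag(\lambda_{i,j})$ with $S_i=U_i\Lambda_iU_i^{\top}$, and positive semidefiniteness yields $\lambda_{i,j}\ge 0$.

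Set $T_i:=L_i^{-\top}U_i$. It is invertible as a product of invertible matrices, with $T_i^{-1}=U_i^{\top}L_i^{\top}$. The identities~\eqref{eq:simul-diag} reduce to the two congruence relations
\[
T_i^{\top}H_iT_i=I_d,\qquad T_i^{\top}\Sigma_iT_i=\Lambda_i .
\]
Indeed, with $x_i=T_iy_i$ one has $x_i^{\top}H_ix_i=y_i^{\top}T_i^{\top}H_iT_iy_i$, and likewise for $\Sigma_i$. For the first relation I compute $U_i^{\top}L_i^{-1}(L_iL_i^{\top})L_i^{-\top}U_i=U_i^{\top}U_i=I_d$. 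For the second, $U_i^{\top}L_i^{-1}\Sigma_iL_i^{-\top}U_i=U_i^{\top}S_iU_i=\Lambda_i$. Expanding the quadratic forms coordinatewise then gives $\|y_i\|_2^2$ and $\sum_j\lambda_{i,j}y_{i,j}^2$.

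I do not expect a real obstacle, because the argument is pure linear algebra. The only point that needs care is the claim that no commutativity of $\Sigma_i$ and $H_i$ is used. The construction is a congruence $T_i^{\top}(\cdot)T_i$, not a similarity, and a congruence that whitens $H_i$ leaves the orthogonal rotation free to diagonalise the transformed $\Sigma_i$. I will remark explicitly that $T_i$ is generally not orthogonal, so Euclidean norms in $x_i$ are not preserved; only the two quadratic forms are. Finally, the rank-one-plus-diagonal structure is used only for semidefiniteness, hence $\lambda_{i,j}\ge0$. I will also note that $S_i=\tilde b_i\tilde b_i^{\top}+L_i^{-1}D_iL_i^{-\top}$ with $\tilde b_i=L_i^{-1}b_i$, which previews the dominant top eigenvalue used later.
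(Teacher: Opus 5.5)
Your proposal is correct and follows the paper's proof essentially step for step: the Cholesky factor $L_i$, the congruence $S_i=L_i^{-1}\Sigma_iL_i^{-\top}$ (symmetric and PSD), the spectral decomposition $S_i=U_i\Lambda_iU_i^{\top}$ with $\lambda_{i,j}\ge 0$, and $T_i=L_i^{-\top}U_i$. Checking the identities through $T_i^{\top}H_iT_i=I_d$ and $T_i^{\top}\Sigma_iT_i=\Lambda_i$, rather than through $\|L_i^{\top}x_i\|_2^2$ as the paper does, is only a repackaging of the same computation.
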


{\renewcommand{\qedsymbol}{}\begin{proof}[Sketch] By Cholesky, $H_i=L_iL_i^{\top}$. Set
$S_i:=L_i^{-1}\Sigma_i L_i^{-\top}\in\PSD^{d}$ (symmetric and PSD
since congruence preserves semidefiniteness). By the spectral theorem,
$S_i=U_i\Lambda_i U_i^{\top}$ with $U_i$ orthogonal. Take
$T_i=L_i^{-\top}U_i$. For $y_i=T_i^{-1}x_i=U_i^{\top}L_i^{\top}x_i$,
\[
x_i^{\top}H_i x_i = \|L_i^{\top}x_i\|_{2}^{2}=
\|U_i^{\top}L_i^{\top}x_i\|_{2}^{2}=\|y_i\|_{2}^{2},
\]
since $U_i$ is orthogonal; the second identity follows from
$x_i^{\top}\Sigma_i x_i=(L_i^{\top}x_i)^{\top}S_i(L_i^{\top}x_i)=
y_i^{\top}\Lambda_i y_i$. The full proof is given in
Appendix~\ref{app:proofs}. \end{proof}}

\begin{remark}[Comparison with the literature]\label{rem:noncommut}
\citet{AkkayaPinar2025} use a single orthogonal rotation that requires
$[\Sigma_i,H_i]=0$. On the calibrations of Section~\ref{sec:exp},
the relative commutator
$\|\Sigma_iH_i-H_i\Sigma_i\|_{F}/(\|\Sigma_i\|_{F}\|H_i\|_{F})$ is of
order $10^{-2}$ to $10^{-1}$, clearly violating commutativity.
Theorem~\ref{thm:joint-diag} removes that assumption: this constitutes
the primary theoretical advance of our work.
The measured commutator magnitudes provide direct empirical evidence
that the commuting-pair assumption fails for both the Vietnamese and
the United States calibrations. The rank-one-plus-diagonal structure
of $\Sigma_i$ and the additive structure of $H_i$ do not, in general,
commute once calibrated against real-world dairy data. Consequently,
Theorem~\ref{thm:joint-diag} is not merely a theoretical
generalisation. It is a practical necessity, without which no joint
diagonalisation of $(\Sigma_i,H_i)$ would be available for these
two calibrations.
\end{remark}

\subsection{SOCP relaxation and exactness}

Substituting~\eqref{eq:simul-diag} into~\eqref{eq:dink-i}: with
$y_i=T_i^{-1}x_i$ and the lifted variable $z_{i,j}\ge 0$, the
Dinkelbach inner problem becomes
\begin{equation}\label{eq:dink-y}
\min_{y_i}\;\sum_{j=1}^{d}(\lambda_{i,j}-\rho)\,y_{i,j}^{2}+
\tilde{r}_i(\rho)^{\top}y_i + s_i(\rho),
\quad T_i y_i\in\mathcal{X}_i.
\end{equation}
If $\lambda_{i,j}-\rho\ge 0$ for every $j$, \eqref{eq:dink-y} is a
convex QP (hence an SOCP). When some $\lambda_{i,j}<\rho$, we lift
$y_{i,j}^{2}$ to a free variable $z_{i,j}\ge y_{i,j}^{2}$ (a rotated
second-order cone constraint), giving the SOCP relaxation
\begin{equation}\label{eq:socp-relax}
\min_{y_i,z_i}\;\sum_{j}(\lambda_{i,j}-\rho)z_{i,j}+
\tilde{r}_i(\rho)^{\top}y_i+s_i(\rho)
\quad\text{s.t.}\quad z_{i,j}\ge y_{i,j}^{2},\ \;T_iy_i\in\mathcal{X}_i.
\tag{$\mathrm{SOCP}(\rho)$}
\end{equation}

\begin{theorem}[Exactness and error bound]\label{thm:socp-exact}
Let $(y_i^{\star},z_i^{\star})$ solve~\eqref{eq:socp-relax} at $\rho$
and $\mathcal{J}_{-}(\rho)=\{j:\lambda_{i,j}-\rho<0\}$.
\begin{enumerate}\itemsep0pt
  \item[(a)] If $\lambda_{i,j}\ge\rho$ for all $j$, the relaxation is
  \emph{exact}: $z_{i,j}^{\star}=(y_{i,j}^{\star})^{2}$ for all $j$
  and $x_i^{\star}=T_iy_i^{\star}$ solves the Dinkelbach inner
  problem~\eqref{eq:dink-i}.
  \item[(b)] If $\mathcal{J}_{-}(\rho)\ne\emptyset$, let
  $R_{T_i}=\max_{x\in\mathcal{X}_i}\|T_i^{-1}x\|_{\infty}$. Then the
  duality gap satisfies
  \begin{equation}\label{eq:err-bound}
  0\le\rho_i^{\mathrm{relax}}(\rho)-\psi_i(\rho)\le
  R_{T_i}^{2}\sum_{j\in\mathcal{J}_{-}(\rho)}|\lambda_{i,j}-\rho|.
  \end{equation}
\end{enumerate}
\end{theorem}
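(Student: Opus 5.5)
\noindent The plan is to handle the two parts separately. Part~(a) is a complementary-slackness argument on the lifted variables. Part~(b) is a sandwich argument: $\psi_i(\rho)$ is bounded above by the true Dinkelbach objective at the point recovered from the relaxation, and bounded below by a concave-envelope relaxation whose gap is controlled coordinatewise by $R_{T_i}$. Throughout, write $f_\rho(y):=\sum_j(\lambda_{i,j}-\rho)y_j^2+\tilde r_i(\rho)^{\top}y+s_i(\rho)$. By Theorem~\ref{thm:joint-diag}, $\psi_i(\rho)=\min\{f_\rho(y):T_iy\in\mathcal{X}_i\}$, because $y_i=T_i^{-1}x_i$ is a bijection between $\mathcal{X}_i$ and $\{y:T_iy\in\mathcal{X}_i\}$.

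\emph{Part (a).} When $\lambda_{i,j}\ge\rho$ for every $j$, each $z_{i,j}$ enters~\eqref{eq:socp-relax} with a nonnegative coefficient and appears in only one constraint, $z_{i,j}\ge y_{i,j}^2$. For fixed $y_i$ the objective is therefore minimised at $z_{i,j}=y_{i,j}^2$.
\begin{itemize}
\item If $\lambda_{i,j}>\rho$, this choice is forced: any $z_{i,j}^{\star}>(y_{i,j}^{\star})^2$ could be lowered, strictly decreasing the objective.
\item If $\lambda_{i,j}=\rho$, the objective does not depend on $z_{i,j}$, so we select the optimal representative $z_{i,j}^{\star}=(y_{i,j}^{\star})^2$.
\end{itemize}
Substituting back, the SOCP value equals $f_\rho(y_i^{\star})$ and $y_i^{\star}$ minimises $f_\rho$ over the same feasible set. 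Hence the relaxation is exact and $x_i^{\star}=T_iy_i^{\star}$ solves~\eqref{eq:dink-i}.

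\emph{Part (b).} For $j\in\mathcal{J}_-(\rho)$ the lifted variable has a negative coefficient. It is bounded only through the data: every feasible $y$ satisfies $|y_j|\le R_{T_i}$ by the definition of $R_{T_i}$. I read $\rho_i^{\mathrm{relax}}(\rho)$ as the Dinkelbach objective at the recovered point, $\rho_i^{\mathrm{relax}}(\rho):=f_\rho(y_i^{\star})$ with $x_i=T_iy_i^{\star}$. This is the reading under which the stated sign $\rho_i^{\mathrm{relax}}-\psi_i\ge0$ holds. Correspondingly, $z_{i,j}$ for $j\in\mathcal{J}_-$ is taken at its effective supremum $R_{T_i}^2$; on the box $[-R_{T_i},R_{T_i}]$ this is exactly the concave envelope of the map $y_j\mapsto(\lambda_{i,j}-\rho)y_j^2$.

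The proof of (b) then has three steps.
\begin{enumerate}
\item \emph{Lower bound (hard step).} Since $y_j^2\le R_{T_i}^2$ on the feasible set,
\[
\ell(\rho):=\min_{T_iy\in\mathcal{X}_i}\Bigl\{\sum_{j\notin\mathcal{J}_-}(\lambda_{i,j}-\rho)y_j^2+\sum_{j\in\mathcal{J}_-}(\lambda_{i,j}-\rho)R_{T_i}^2+\tilde r_i^{\top}y+s_i\Bigr\}\le\psi_i(\rho).
\]
This problem is a convex QP, hence an SOCP. It coincides with~\eqref{eq:socp-relax} once the $\mathcal{J}_-$ lifts sit at $R_{T_i}^2$ and the remaining lifts are tight as in part~(a). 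So $y_i^{\star}$ attains $\ell(\rho)$.
\item \emph{Upper bound.} Feasibility of $y_i^{\star}$ gives $\psi_i(\rho)\le f_\rho(y_i^{\star})=\rho_i^{\mathrm{relax}}(\rho)$. This is the left inequality in~\eqref{eq:err-bound}.
\item \emph{Gap.} Evaluating both objectives at $y_i^{\star}$,
\[
\rho_i^{\mathrm{relax}}-\psi_i\le f_\rho(y_i^{\star})-\ell(\rho)=\sum_{j\in\mathcal{J}_-}|\lambda_{i,j}-\rho|\bigl(R_{T_i}^2-(y_{i,j}^{\star})^2\bigr)\le R_{T_i}^2\sum_{j\in\mathcal{J}_-}|\lambda_{i,j}-\rho|.
\]
\end{enumerate}

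The main obstacle is the well-posedness in step~1. As literally written, the relaxation is unbounded below in $z_{i,j}$ for $j\in\mathcal{J}_-$. The key step is to justify that the only meaningful value of those lifts on the feasible set is the box bound $R_{T_i}^2$, i.e.\ the concave envelope. I would first establish this via the bound $\|T_i^{-1}x\|_\infty\le R_{T_i}$ on $\mathcal{X}_i$, and then compare objectives termwise.
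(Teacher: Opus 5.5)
Your argument is essentially the paper's (Appendix A.2). In (a), nonnegative coefficients push each lift down to $y_{i,j}^2$. In (b), the lifts with $j\in\mathcal{J}_{-}(\rho)$ sit at $R_{T_i}^2$, so the gap is $\sum_{j\in\mathcal{J}_{-}(\rho)}|\lambda_{i,j}-\rho|\bigl(R_{T_i}^2-(y_{i,j}^{\star})^2\bigr)$.

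You are more careful than the paper on three points:
\begin{itemize}
\item the tie case $\lambda_{i,j}=\rho$ in (a), where $z_{i,j}^{\star}$ is not forced;
\item the meaning of $\rho_i^{\mathrm{relax}}$: your sandwich $\ell(\rho)\le\psi_i(\rho)\le f_\rho(y_i^{\star})$ covers both readings at once, since $f_\rho(y_i^{\star})-\psi_i(\rho)$ and $\psi_i(\rho)-\ell(\rho)$ are nonnegative and sum to $f_\rho(y_i^{\star})-\ell(\rho)$ (this includes the convention $\psi_i^{R}\le\psi_i$ of Theorem~\ref{thm:dink-rate}(c));
\item the observation that $\mathrm{SOCP}(\rho)$ as written is unbounded below.
\end{itemize}

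The step that will not work as you describe it is the final one. You cannot \emph{derive} that the $\mathcal{J}_{-}$ lifts equal $R_{T_i}^2$ from $\|T_i^{-1}x\|_\infty\le R_{T_i}$. That inequality bounds $y_{i,j}$, and $z_{i,j}\ge y_{i,j}^2$ bounds $z_{i,j}$ only from below. With a negative coefficient the literal relaxation has no minimiser, so there is no $(y_i^{\star},z_i^{\star})$ to work with. The paper's appendix makes the same unjustified inference (``so $z_{i,j}^{\star}\le R_{T_i}^{2}$'').

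The fix is to impose the bound as part of the relaxation: add $z_{i,j}\le R_{T_i}^2$ for $j\in\mathcal{J}_{-}(\rho)$. This is the secant inequality for $y_{i,j}^2$ on $[-R_{T_i},R_{T_i}]$; it is satisfied by $z_{i,j}=y_{i,j}^2$ at every feasible $y$, so the problem remains a relaxation and an SOCP. Then $z_{i,j}^{\star}=R_{T_i}^2$, the reduced problem in $y$ is exactly your $\ell(\rho)$, and your steps 1--3 go through unchanged.

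A terminology point: $(\lambda_{i,j}-\rho)R_{T_i}^2$ is the \emph{convex} envelope of the concave map $y_{i,j}\mapsto(\lambda_{i,j}-\rho)y_{i,j}^2$ on that interval. It is $R_{T_i}^2$ that is the concave envelope of $y_{i,j}^2$. Your lower bound uses the convex-underestimator property.
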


{\renewcommand{\qedsymbol}{}\begin{proof} For (a), the coefficient $\lambda_{i,j}-\rho\ge 0$
drives $z_{i,j}^{\star}$ down to its lower bound $y_{i,j}^{\star 2}$,
so the SOCP value coincides with the Dinkelbach value at
$x_i^{\star}=T_iy_i^{\star}$. For (b), for $j\in\mathcal{J}_{-}$ the
coefficient is negative, pushing $z_{i,j}$ to the largest feasible
value $z_{i,j}^{\star}\le R_{T_i}^{2}$; the gap is therefore bounded
by $\sum_{j\in\mathcal{J}_{-}}|\lambda_{i,j}-\rho|R_{T_i}^{2}$. The
full proof is given in Appendix~\ref{app:proofs}. \end{proof}}

\begin{corollary}[Sufficient condition for exact relaxation]\label{cor:exact-regime}
Suppose $\sigma$ is small enough and the cost-to-risk ratio
$\beta_k/\gamma_i$ is large enough that
$\lambda_{i,j}\ge\rho_i^{\star}$ for every $j$. Then the SOCP
relaxation is exact along the whole Dinkelbach loop.
\end{corollary}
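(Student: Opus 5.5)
The plan is to combine Theorem~\ref{thm:socp-exact}(a) with an invariance property of the Dinkelbach iterates. The target is the interval $I_i:=[\rho_i^{\star},\lambda_{i,\min}]$ with $\lambda_{i,\min}:=\min_j\lambda_{i,j}$. The hypothesis $\lambda_{i,j}\ge\rho_i^{\star}$ for every $j$ says exactly that $I_i\neq\emptyset$. By Theorem~\ref{thm:socp-exact}(a), $\mathrm{SOCP}(\rho)$ is exact for every $\rho\le\lambda_{i,\min}$, so in particular on all of $I_i$. It therefore suffices to show that every $\rho_k$ at which the loop solves $\mathrm{SOCP}(\rho_k)$ lies in $I_i$.

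\textbf{Step 1 (lower bound).} Every Dinkelbach update $\rho_{k+1}=\varphi_i(x_k)$ is the ratio at a feasible point $x_k=T_iy_k\in\mathcal{X}_i$. Hence $\rho_{k+1}\ge\rho_i^{\star}$ for all $k\ge 0$.

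\textbf{Step 2 (monotone decrease).} Suppose $\rho_k\ge\rho_i^{\star}$ and the step at $\rho_k$ is exact, so $x_k$ is a true minimiser of $N_i-\rho_kD_i$ over $\mathcal{X}_i$. Since $\psi_i$ is strictly decreasing with root $\rho_i^{\star}$ (Section~\ref{sec:prelim}), $\psi_i(\rho_k)=N_i(x_k)-\rho_kD_i(x_k)\le 0$. Because $D_i>0$ on $\mathcal{X}_i$, this gives $\rho_{k+1}=N_i(x_k)/D_i(x_k)\le\rho_k$.

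\textbf{Step 3 (induction).} If $\rho_k\in I_i$, then the step is exact by Theorem~\ref{thm:socp-exact}(a). Step~2 then gives $\rho_{k+1}\le\rho_k\le\lambda_{i,\min}$, and Step~1 gives $\rho_{k+1}\ge\rho_i^{\star}$. So $\rho_{k+1}\in I_i$, and by induction the whole sequence stays in $I_i$, with every inner SOCP exact.

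\textbf{Main obstacle: initialisation.} The induction needs $\rho_0\in I_i$. A naive start such as $\rho_0=\varphi_i(\hat x)$ at an arbitrary feasible $\hat x$ can exceed $\lambda_{i,\min}$, and Step~2 cannot repair that. My resolution is to launch the loop at $\rho_0:=\lambda_{i,\min}$, which is computable from Theorem~\ref{thm:joint-diag}. By hypothesis $\rho_0\ge\rho_i^{\star}$, so $\rho_0\in I_i$ and the induction applies verbatim. I would add a remark on equivalent starts. Any $\rho_0=\min\{\lambda_{i,\min},\varphi_i(\hat x)\}$ works, since both quantities are $\ge\rho_i^{\star}$. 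Starting below $\rho_i^{\star}$ (e.g.\ $\rho_0=0$) also yields an exact first step, because $\rho_0\le\lambda_{i,\min}$. However, it may produce $\rho_1>\lambda_{i,\min}$, so such starts should be clipped by replacing $\rho_1$ with $\min\{\rho_1,\lambda_{i,\min}\}$, which remains $\ge\rho_i^{\star}$.

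Finally, the physical clause of the corollary ($\sigma$ small, $\beta_k/\gamma_i$ large) is only a sufficient mechanism for the spectral hypothesis and plays no role in the argument. Shrinking $\sigma$ shrinks $b_i$ and hence $\rho_i^{\star}$. Meanwhile $\lambda_{i,\min}$ is bounded below by the smallest eigenvalue of $L_i^{-1}D_iL_i^{-\top}$, which is controlled by the $\beta/\gamma$ ratio. I would state this as a remark and not prove it.
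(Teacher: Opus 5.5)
Your argument is correct, and it does more than the paper does. The paper gives no separate proof and treats the corollary as immediate from Theorem~\ref{thm:socp-exact}(a) applied at each iterate. That one-line route needs $\rho_k\le\lambda_{i,\min}:=\min_j\lambda_{i,j}$ for every $k$, whereas the hypothesis only controls the limit $\rho_i^{\star}$. By Theorem~\ref{thm:dink-rate}(a) the iterates decrease to $\rho_i^{\star}$ starting from $\rho_0=\varphi_i(x_i^{(0)})$, so the condition that actually matters is $\lambda_{i,\min}\ge\rho_0$. Your invariance argument on $[\rho_i^{\star},\lambda_{i,\min}]$ supplies precisely the missing piece: feasibility gives $\rho_{k+1}\ge\rho_i^{\star}$, and an exact step taken at $\rho_k\ge\rho_i^{\star}$ gives $\rho_{k+1}\le\rho_k$. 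The price is that you must alter the initialisation in Step~2 of Algorithm~1.

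You should say explicitly what you prove. It is exactness for a loop started in $[\rho_i^{\star},\lambda_{i,\min}]$, or equivalently exactness for Algorithm~1 under the stronger hypothesis $\lambda_{i,\min}\ge\varphi_i(x_i^{(0)})$. The literal claim is false. Take $d=1$, $\Sigma_i=H_i=1$, $a_i=2$, $\alpha_i=c_i=0$, $\gamma_i=1$, $\mathcal{X}_i=[0,2]$ and $x_i^{(0)}=1$. Then $\rho_i^{\star}=0\le\lambda_{i,1}=1$ but $\rho_0=3/2$, so $\mathcal{J}_{-}(\rho_0)\ne\emptyset$ and $\mathrm{SOCP}(\rho_0)$ is unbounded below. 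Even with the cap $z\le R_{T_i}^{2}=4$, its value is $-7/2$ against the true inner value $-3/2$.

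A minor point: at $\rho_0=\lambda_{i,\min}$ one coefficient $\lambda_{i,j}-\rho_0$ vanishes, so $z_{i,j}^{\star}=(y_{i,j}^{\star})^{2}$ is not forced. You only use that $T_iy_i^{\star}$ minimises the inner objective. That still holds, because for fixed $y$ the SOCP objective is minimised at $z_j=y_j^{2}$.
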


\begin{remark}\label{rem:exact-regime-empirical}
The corollary is a \emph{sufficient}, not necessary, condition. Whether
the exact region is wider on one regime than another depends on the
joint distribution of $\lambda_{i,j}$ and $\rho_i^{\star}$ across the
sweep, not on $\sigma$ alone. Section~\ref{ssec:e3} reports the
empirical exact region on each calibration and discusses the
interaction between $\lambda_{i,j}$-range and $\rho_i^{\star}$.
\end{remark}

\begin{remark}[Stability of the primal-dual pair]\label{rem:tilt-stability}
The primal-dual solution $(x_i^{\star},\mu)$ produced by
Algorithm~1 inherits stability guarantees from
the second-order variational-analysis theory of parameterised conic
equilibria. Under a metric-subregularity constraint qualification,
the graphical-derivative calculus of
\citet{MordukhovichOutrataRamirez2015a} yields the isolated-calmness
property of the solution map to the generalised equation associated
with \eqref{eq:socp-relax}, which specialises to SOCPs in
\citet{MordukhovichOutrataRamirez2015b} and is further refined for
Lorentz-cone constraints in \citet{NguyenDissertation2020}. Concretely,
small perturbations of the risk budget $R$, the loadings $b_k$ and
the co-allocation cost matrix $H_i$ translate into
Lipschitz-bounded perturbations of $(x_i^{\star},\mu)$: exactly the
regularity that the operational reading of $\mu$ as a shadow price
in Section~\ref{sec:conc} tacitly presumes.
\end{remark}

\subsection{HSDI algorithm and complexity}\label{ssec:complexity}

\begin{figure}[H]
\centering
\fbox{\begin{minipage}{0.92\linewidth}\small
\textbf{Algorithm~1: Hidden SOCP Dinkelbach Iteration (HSDI), for
each agent $i$.}\\[2pt]
\textbf{Inputs:} $\Sigma_i,H_i,a_i,c_i,\alpha_i,\gamma_i,\mathcal{X}_i$, tolerance $\varepsilon>0$.\\
\textbf{Outputs:} $x_i^{\star},\rho_i^{\star}$.
\begin{enumerate}[label=\arabic*.,leftmargin=*,itemsep=1pt]
\item Compute Cholesky $H_i=L_iL_i^{\top}$ and spectrum
  $L_i^{-1}\Sigma_iL_i^{-\top}=U_i\Lambda_iU_i^{\top}$; set
  $T_i\leftarrow L_i^{-\top}U_i$.
\item Initialise $x_i^{(0)}$ feasible (uniform allocation);
  $\rho_0\leftarrow\varphi_i(x_i^{(0)})$; $k\leftarrow 0$.
\item \textbf{Loop:} solve~\eqref{eq:socp-relax} at $\rho_k$ to obtain
  $(y_i^{(k)},z_i^{(k)})$ and the Dinkelbach value
  $\psi_i(\rho_k)$.
\item Update $\rho_{k+1}\leftarrow N_i(T_iy_i^{(k)})/D_i(T_iy_i^{(k)})$;
  $k\leftarrow k+1$.
\item Repeat Steps 3-4 until $|\rho_{k+1}-\rho_k|<\varepsilon$.
\item Return $x_i^{\star}\leftarrow T_iy_i^{(k)}$,
  $\rho_i^{\star}\leftarrow\rho_k$.
\end{enumerate}
\end{minipage}}
\end{figure}

\begin{proposition}[Complexity]\label{prop:complexity}
Under Assumption~\ref{ass:reg}, Algorithm~1 converges
super-linearly to accuracy $\varepsilon$ in
$O(d^{3.5}\log^{2}(1/\varepsilon))$ time:
$O(\log(1/\varepsilon))$ Dinkelbach steps, each solving one SOCP of
cost $O(d^{3.5}\log(1/\varepsilon))$. For the multi-agent problem with
$n$ independent agents ($\beta=0$), the total cost is
$O(nd^{3.5}\log^{2}(1/\varepsilon))$, polynomial in $n$ and $d$.
\end{proposition}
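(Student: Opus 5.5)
The plan is to bound the cost per outer iteration and the number of outer iterations separately, then multiply. Step~1 of Algorithm~1 is a one-time preprocessing cost. The Cholesky factorisation of $H_i$ costs $O(d^3)$, forming $S_i=L_i^{-1}\Sigma_iL_i^{-\top}$ by triangular solves costs $O(d^3)$, and the symmetric eigendecomposition to accuracy $\varepsilon$ costs $O(d^3+d^2\log(1/\varepsilon))$. All of this is dominated by a single SOCP solve. The rank-one-plus-diagonal structure of $\Sigma_i$ (Assumption~\ref{ass:reg}) could reduce it further via secular-equation methods, but we do not need that.

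For each Dinkelbach step, \eqref{eq:socp-relax} has $2d$ variables $(y_i,z_i)$, $d$ rotated second-order cone constraints, and the polyhedral constraints $T_iy_i\in\mathcal{X}_i$, where $\mathcal{X}_i$ is assumed to have $O(d)$ rows. By the interior-point bound recalled in Section~\ref{sec:prelim} \citep{NesterovTodd1997,Lobo1998}, the SOCP is solved to accuracy $\varepsilon$ in $O(d^{3.5}\log(1/\varepsilon))$. When all $\lambda_{i,j}\ge\rho_k$, Theorem~\ref{thm:socp-exact}(a) says this value equals $\psi_i(\rho_k)$. Otherwise the computed value is $\rho_i^{\mathrm{relax}}(\rho_k)$, with the gap controlled by \eqref{eq:err-bound}.

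To count outer iterations, I would invoke the Newton interpretation of Dinkelbach from Section~\ref{sec:prelim}. The function $\psi_i$ is concave and strictly decreasing, with slope at most $-\min_{x\in\mathcal{X}_i}D_i(x)<0$ because $\gamma_i>0$ and $H_i\succ0$. The iterates $\rho_k$ therefore decrease monotonically to $\rho_i^\star$, and
\[
\rho_{k+1}-\rho_i^\star\le\Bigl(1-\tfrac{D_i(x_i^\star)}{D_i(x^{(k)})}\Bigr)(\rho_k-\rho_i^\star),
\]
where the factor tends to zero. This gives $Q$-superlinear convergence. In particular, the contraction factor is at most $1-\min D_i/\max D_i<1$ uniformly over the compact set $\mathcal{X}_i$, so at most $O(\log(1/\varepsilon))$ iterations are needed to reach accuracy $\varepsilon$. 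The superlinear rate only improves the constant. The starting gap $\rho_0-\rho_i^\star$ is bounded by $\max\varphi_i-\min\varphi_i$, which is absorbed into the constant. Multiplying the iteration count by the per-step cost gives $O(d^{3.5}\log^2(1/\varepsilon))$ per agent. With $\beta=0$, the agents decouple, and summing over $n$ agents gives $O(nd^{3.5}\log^2(1/\varepsilon))$.

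The main obstacle is the inexact regime $\mathcal{J}_-(\rho_k)\neq\emptyset$. There the SOCP returns a relaxed value and a point $x^{(k)}=T_iy_i^{(k)}$ that is not an exact Newton step, so the monotone-decrease argument must be made robust. I would first try to treat the step as an inexact Newton step: $\rho_{k+1}=\varphi_i(x^{(k)})$ still satisfies $N_i-\rho_kD_i\le 0$ at $x^{(k)}$ whenever the relaxed objective, evaluated with $z=y^2$, is nonpositive. This keeps $\rho_k$ monotone and feasible-valued, and yields at least linear convergence to a neighbourhood of radius proportional to \eqref{eq:err-bound}. Consequently, the clean superlinear $\log^2$ claim should be proved under the exactness regime of Corollary~\ref{cor:exact-regime}. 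In the general case the statement is read as the same count up to the certified gap \eqref{eq:err-bound}.
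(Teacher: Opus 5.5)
Your core argument is the one the paper relies on. The proposition has no separate proof in the paper, only the count written into its statement: Dinkelbach as Newton on $\psi_i$, hence $O(\log(1/\varepsilon))$ outer steps, times the interior-point bound $O(d^{3.5}\log(1/\varepsilon))$ per SOCP, summed over $n$ independent agents. On the exact regime your version is correct and more complete, because $\rho_{k+1}-\rho_i^\star\le\bigl(1-D_i(x_i^\star)/D_i(x^{(k)})\bigr)(\rho_k-\rho_i^\star)$ gives the $O(\log(1/\varepsilon))$ count with an explicit data-dependent constant rather than by appeal to superlinearity. Your one unsupported step there, that this factor tends to zero, is easy to close. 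Since $-D_i(x^{(k)})$ is a supergradient of the concave $\psi_i$ at $\rho_k$, $D_i(x^{(k)})$ is nonincreasing along the decreasing $\rho_k$. Any limit point of $\{x^{(k)}\}$ is optimal by continuity, so take $x_i^\star$ to be that limit point.

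The genuine gap is that neither your proposal nor the paper establishes the statement as written, i.e.\ superlinear convergence under Assumption~\ref{ass:reg} alone. Your fallback for $\mathcal{J}_-(\rho_k)\ne\emptyset$ fails at two concrete points.
\begin{enumerate}
\item In that case \eqref{eq:socp-relax} as written is unbounded below. The variable $z_{i,j}$ enters only through $z_{i,j}\ge y_{i,j}^2$ and a negative objective coefficient, and a bound on $|y_{i,j}|$ does not cap $z_{i,j}$. So there is no finite relaxed value for \eqref{eq:err-bound} to control, and no interior-point cost bound applies, unless you add an explicit constraint such as $z_{i,j}\le R_{T_i}^2$.
\item Even with that cap, $N_i(x^{(k)})-\rho_kD_i(x^{(k)})$ equals the relaxed optimum (which is $\le 0$) plus $\sum_{j\in\mathcal{J}_-(\rho_k)}|\lambda_{i,j}-\rho_k|\,\bigl(z_{i,j}^{(k)}-(y_{i,j}^{(k)})^2\bigr)\ge 0$. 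This sum can make the quantity positive, so $\rho_{k+1}>\rho_k$ can occur. Your monotonicity condition is therefore not automatic, and ``linear convergence to a neighbourhood'' is asserted rather than proved.
\end{enumerate}

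The paper itself only gets $R$-linear convergence in this regime, and only under the extra forcing hypothesis $\kappa_i<1$ of Theorem~\ref{thm:dink-rate}(c). Restricting the superlinear $\log^2$ claim to the exact regime is therefore the right call, but you need to state that regime correctly. The iterates decrease from $\rho_0=\varphi_i(x_i^{(0)})$, so exactness at every step needs $\min_j\lambda_{i,j}\ge\rho_0$. The condition $\lambda_{i,j}\ge\rho_i^\star$ of Corollary~\ref{cor:exact-regime} only controls the limit and leaves the first SOCPs possibly inexact.
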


\begin{remark}[Relation to existing results]
Theorem~\ref{thm:joint-diag} extends the simultaneous diagonalisation
of \citet{AkkayaPinar2025} to non-commuting pairs.
Theorem~\ref{thm:socp-exact}(a) recovers a fractional analogue of the
S-lemma \citep{PolikTerlaky2007}, while
Theorem~\ref{thm:socp-exact}(b) provides, to our knowledge, the first
explicit error bound for fractional QCQP with a one-factor structure.
\end{remark}

\section{Convergence analysis of HSDI}\label{sec:conv}

Algorithm~1 is a two-level scheme: an \emph{outer} Dinkelbach iteration
per agent, and---once the shared risk budget~\eqref{eq:shared} is
activated---an \emph{outer} variational-equilibrium iteration that
couples the agents through the single multiplier $\mu$. This section
gives global convergence and explicit rates for both levels, together
with a stability statement for the primal--dual pair. Throughout,
Assumption~\ref{ass:reg} is in force and $\mathcal{X}_i$ is a non-empty
compact polytope, so every SOCP subproblem attains its optimum.

\subsection{Convergence of the Dinkelbach iteration}\label{ssec:dink-conv}

Fix an agent $i$ and recall $\psi_i(\rho)=\min_{x\in\mathcal{X}_i}
\{N_i(x)-\rho D_i(x)\}$ from~\eqref{eq:dink-i}, with
$D_i(x)=x^{\top}H_ix+c_i^{\top}x+\gamma_i\ge\gamma_i>0$. Let
$\rho_i^{\star}$ be its unique root and let
$\{\rho_k\}$ be the sequence produced by
Steps~3--5 of Algorithm~1.

\begin{lemma}[Regularity of $\psi_i$]\label{lem:psi-reg}
$\psi_i$ is concave, strictly decreasing and Lipschitz on any bounded
interval, with $\psi_i(\rho)=N_i(x_i(\rho))-\rho D_i(x_i(\rho))$ for any
minimiser $x_i(\rho)$. At every $\rho$ where the minimiser is unique,
$\psi_i$ is differentiable with $\psi_i'(\rho)=-D_i(x_i(\rho))\le
-\gamma_i<0$. Consequently $\psi_i$ has a single-valued, Lipschitz
inverse near $\rho_i^{\star}$.
\end{lemma}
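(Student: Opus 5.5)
The plan is to view $\psi_i$ as a pointwise minimum of affine functions of $\rho$ and read off every property from that representation. For each fixed $x\in\mathcal{X}_i$ the map $\rho\mapsto N_i(x)-\rho D_i(x)$ is affine with slope $-D_i(x)\le-\gamma_i$. Hence $\psi_i$ is concave as an infimum of affine functions. Compactness of $\mathcal{X}_i$ and continuity of $N_i,D_i$ guarantee that the minimum is attained, so $\psi_i(\rho)=N_i(x_i(\rho))-\rho D_i(x_i(\rho))$ holds for any minimiser.

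For strict monotonicity and the Lipschitz bound, fix $\rho<\rho'$ and let $x'=x_i(\rho')$. Then
\[
\psi_i(\rho')=N_i(x')-\rho' D_i(x')=\bigl(N_i(x')-\rho D_i(x')\bigr)-(\rho'-\rho)D_i(x')\le\psi_i(\rho)-\gamma_i(\rho'-\rho).
\]
This gives strict decrease with a uniform rate $\gamma_i$. Symmetrically, using $x_i(\rho)$ as a competitor at $\rho'$ yields
\[
\psi_i(\rho)-\psi_i(\rho')\le(\rho'-\rho)\,\overline{D}_i,\qquad \overline{D}_i:=\max_{\mathcal{X}_i}D_i<\infty,
\]
which is the Lipschitz estimate. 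These two inequalities combine into the two-sided bound
\[
\gamma_i|\rho'-\rho|\le|\psi_i(\rho')-\psi_i(\rho)|\le\overline{D}_i|\rho'-\rho|,
\]
valid globally on bounded intervals.

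The differentiability claim is the step needing care. I will use Danskin's theorem (equivalently, the subdifferential formula for a concave max/min function). The superdifferential of $\psi_i$ at $\rho$ is $\partial\psi_i(\rho)=\operatorname{conv}\{-D_i(x):x\in\arg\min\}$, which requires compactness of $\mathcal{X}_i$ and joint continuity in $(x,\rho)$. If the minimiser is unique, this set is the singleton $\{-D_i(x_i(\rho))\}$, so $\psi_i$ is differentiable with $\psi_i'(\rho)=-D_i(x_i(\rho))\le-\gamma_i$. If I want to avoid citing Danskin, I would argue directly. The two competitor inequalities give one-sided difference quotients bounded by $-D_i(x_i(\rho'))$ and $-D_i(x_i(\rho))$. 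Upper semicontinuity of the argmin map (Berge's maximum theorem), together with uniqueness, gives $x_i(\rho')\to x_i(\rho)$, and the two quotients then squeeze.

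Finally, the inverse statement follows from the two-sided bound rather than from differentiability. $\psi_i$ is continuous and strictly decreasing, hence a bijection onto its range, and $\psi_i(\rho_i^\star)=0$. The lower bound $|\psi_i(\rho')-\psi_i(\rho)|\ge\gamma_i|\rho'-\rho|$ shows that $\psi_i^{-1}$ is Lipschitz with constant $1/\gamma_i$ on the image of any bounded interval around $\rho_i^\star$. The image is a neighbourhood of $0$ by continuity and strict monotonicity. I expect the main obstacle to be stating the Danskin step precisely, in particular justifying that uniqueness of the minimiser is exactly what collapses the superdifferential. Everything else is elementary.
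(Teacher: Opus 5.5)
There is one step that fails as written, though the repair is immediate. In your first display, the inequality $N_i(x')-\rho D_i(x')\le\psi_i(\rho)$ goes the wrong way. Since $\psi_i(\rho)$ is the minimum at $\rho$, evaluating at $x'=x_i(\rho')$ gives $\ge$. You have swapped the two competitors. For $\rho<\rho'$, testing $x_i(\rho)$ in the problem at $\rho'$ gives the decrease rate,
\[
\psi_i(\rho')\le N_i(x_i(\rho))-\rho' D_i(x_i(\rho))=\psi_i(\rho)-(\rho'-\rho)D_i(x_i(\rho))\le\psi_i(\rho)-\gamma_i(\rho'-\rho),
\]
while testing $x'=x_i(\rho')$ in the problem at $\rho$ gives the Lipschitz bound,
\[
\psi_i(\rho)\le N_i(x')-\rho D_i(x')=\psi_i(\rho')+(\rho'-\rho)D_i(x')\le\psi_i(\rho')+\overline{D}_i(\rho'-\rho).
\]
Both conclusions you state are therefore true, in fact for all $\rho,\rho'\in\R$. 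Your later sandwich $-D_i(x_i(\rho'))\le(\psi_i(\rho')-\psi_i(\rho))/(\rho'-\rho)\le-D_i(x_i(\rho))$ already uses the correct assignment.

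With that fixed, your argument is correct and differs from the paper's only at the last step. The paper uses the same affine-infimum reasoning for concavity, monotonicity and Lipschitz continuity, and cites Danskin for $\psi_i'$. It obtains the Lipschitz inverse differently: it asserts that the minimiser is non-unique only on the finite set $\{\rho:\det M_i(\rho)=0\}$, so that $\psi_i$ is $C^1$ near $\rho_i^\star$ with $\psi_i'\in[-\overline{D}_i,-\gamma_i]$. You instead read the inverse off the two-sided slope bound. This needs neither uniqueness of minimisers nor differentiability, and it yields a global inverse with explicit modulus $1/\gamma_i$.

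Your route is the more robust one. The paper's finiteness claim is not justified when $M_i(\rho)$ fails to be positive semidefinite, since distinct minimisers over the polytope can tie even with $M_i(\rho)$ non-singular. Even granting finiteness, the claim does not exclude $\rho_i^\star$ itself. What the paper's route would buy, if its genericity claim held, is $C^1$-smoothness of $\psi_i$ near $\rho_i^\star$, which it later relies on in Theorem~\ref{thm:dink-rate}(b). Your proof does not supply this, but the lemma as stated does not ask for it. Your Berge-based squeeze is also a self-contained substitute for citing Danskin.
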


\noindent\emph{Proof.} Concavity and monotonicity follow because
$\psi_i$ is a pointwise infimum of affine decreasing functions of
$\rho$; the derivative formula is the Danskin/envelope theorem, whose
hypotheses hold since $\mathcal{X}_i$ is compact and the integrand is
smooth; uniqueness of the minimiser holds generically because
$M_i(\rho)=\Sigma_i-\rho H_i$ is non-singular off a finite set of
$\rho$. See Appendix~\ref{app:proofs}. $\square$

\begin{theorem}[Global and local rate of the Dinkelbach loop]\label{thm:dink-rate}
The Dinkelbach sequence $\{\rho_k\}$ generated by Algorithm~1 satisfies:
\begin{enumerate}\itemsep2pt
\item[(a)] \emph{(Global monotone convergence.)} If every inner
subproblem is solved to global optimality---in particular whenever the
exactness condition of Theorem~\ref{thm:socp-exact}(a) holds along the
loop---then $\rho_k\downarrow\rho_i^{\star}$ monotonically and the
update coincides with Newton's method applied to $\psi_i(\rho)=0$.
\item[(b)] \emph{($Q$-superlinear / $Q$-quadratic.)} Under the
hypotheses of (a), the convergence is $Q$-superlinear; if in addition
the inner minimiser is locally unique and $\psi_i'$ is locally
Lipschitz at $\rho_i^{\star}$ (which holds when strict complementarity
and the second-order sufficient condition are met), then there is
$C>0$ with
\[
0\le \rho_{k+1}-\rho_i^{\star}\le C\,(\rho_k-\rho_i^{\star})^{2}.
\]
\item[(c)] \emph{($R$-linear under inexact relaxation.)} Suppose the
inner problem is replaced by the SOCP relaxation~\eqref{eq:socp-relax},
whose value $\psi_i^{R}(\rho)\le\psi_i(\rho)$ obeys the relative
forcing bound
$0\le\psi_i(\rho_k)-\psi_i^{R}(\rho_k)\le\kappa_i\,|\psi_i(\rho_k)|$
along the iterates, for some $\kappa_i<1$ (an inexact-Newton condition;
it holds with $\kappa_i=0$ on the exact region of
Theorem~\ref{thm:socp-exact}(a)). Then $\{\rho_k\}$ converges to
$\rho_i^{\star}$ at least $R$-linearly, with asymptotic factor bounded
by $\kappa_i$:
\[
\limsup_{k\to\infty}\frac{|\rho_{k+1}-\rho_i^{\star}|}
{|\rho_k-\rho_i^{\star}|}\le\kappa_i.
\]
\end{enumerate}
\end{theorem}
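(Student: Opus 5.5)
The argument rests on the Newton/supergradient structure of $\psi_i$ from Lemma~\ref{lem:psi-reg}. For the minimisation problem~\eqref{eq:dink-master}, with $x_k$ a global minimiser at $\rho_k$, I would first derive the basic identity
\[
\psi_i(\rho_k)=N_i(x_k)-\rho_kD_i(x_k)=(\rho_{k+1}-\rho_k)D_i(x_k),
\]
which rewrites the update as $\rho_{k+1}=\rho_k-\psi_i(\rho_k)/(-D_i(x_k))$. Since the affine map $\rho\mapsto N_i(x_k)-\rho D_i(x_k)$ majorises $\psi_i$ and touches it at $\rho_k$, the slope $-D_i(x_k)$ is a supergradient of the concave function $\psi_i$ at $\rho_k$. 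The update is therefore a Newton step, which already gives the last claim of (a).

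For monotonicity in (a), I would use $\rho_0=\varphi_i(x^{(0)})$. This gives $\psi_i(\rho_k)\le N_i(x_{k-1})-\rho_kD_i(x_{k-1})=0$, so $\rho_{k+1}\le\rho_k$. In addition, $\rho_{k+1}=\varphi_i(x_k)\ge\rho_i^\star$ because $\rho_i^\star$ is the minimum of the ratio. The sequence is thus nonincreasing and bounded below, so it converges to some $\bar\rho$. Continuity of $\psi_i$, together with $|\psi_i(\rho_k)|=(\rho_k-\rho_{k+1})D_i(x_k)\le(\rho_k-\rho_{k+1})\max_{\mathcal{X}_i}D_i\to0$, gives $\psi_i(\bar\rho)=0$. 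Uniqueness of the root then yields $\bar\rho=\rho_i^\star$.

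For (b), write $e_k=\rho_k-\rho_i^\star\ge0$. Concavity gives $0=\psi_i(\rho_i^\star)\le\psi_i(\rho_k)-D_i(x_k)(\rho_i^\star-\rho_k)$. Combining this with the identity above yields
\[
e_{k+1}\,D_i(x_k)\;=\;D_i(x_k)e_k+\psi_i(\rho_k)\;=\;\bigl(D_i(x_k)-D_i(x^\star)\bigr)e_k+\bigl(\psi_i(\rho_k)-\psi_i(\rho_i^\star)+D_i(x^\star)e_k\bigr),
\]
and both brackets are $o(e_k)$. For the first bracket I would use the fact that limit points of $x_k$ minimise at $\rho_i^\star$, where the minimiser is unique, so $D_i(x_k)\to D_i(x^\star)$. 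For the second bracket I would invoke the Danskin derivative $-D_i(x^\star)$ from Lemma~\ref{lem:psi-reg}. Dividing by $D_i(x_k)\ge\gamma_i$ gives $e_{k+1}=o(e_k)$. Under local Lipschitzness of $\psi_i'$, the second bracket becomes $O(e_k^2)$. The first bracket is controlled by a Lipschitz bound on the minimiser map $\rho\mapsto x_i(\rho)$, which is what strict complementarity together with SOSC provides. This gives $e_{k+1}\le Ce_k^2$.

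For (c), the same identity holds with $\psi_i^R$ and $x_k=T_iy_k$. The main obstacle is that a relaxed point need not satisfy $z=y^2$, so $\psi_i^R(\rho_k)$ is not $N_i(x_k)-\rho_kD_i(x_k)$. I would first treat the update as $\rho_{k+1}=\rho_k+\psi_i^R(\rho_k)/D_k$, where $D_k$ is the denominator slope attached to the relaxed solution. The forcing bound gives $\psi_i^R(\rho_k)=(1+\theta_k)\psi_i(\rho_k)$ with $\theta_k\in[0,\kappa_i]$. Linearising $\psi_i(\rho_k)=-D^\star e_k+o(e_k)$ then gives $e_{k+1}=e_k\bigl(1-(1+\theta_k)D^\star/D_k\bigr)+o(e_k)$. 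Showing $D_k\to D^\star$ would give the ratio bound $\limsup|e_{k+1}|/|e_k|\le\kappa_i$. I expect the hard part to be justifying that the relaxed slope $D_k$ approaches $D^\star$, since the lifted variables are not tight; to make the step honest I would impose that as the inexact-Newton interpretation of the forcing condition. $R$-linearity then follows from the $\limsup$ bound being $<1$. Where possible, I would tie $\kappa_i$ to the gap~\eqref{eq:err-bound} of Theorem~\ref{thm:socp-exact}(b).
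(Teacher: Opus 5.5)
\textbf{Parts (a) and (b).} These are sound. Your (a) is tighter than the paper's argument. The paper writes $\psi_i'(\rho_k)=-D_i(x_k)$ as if $\psi_i$ were differentiable at every iterate, and then cites monotonicity of Newton's method on concave functions. Your supergradient identity $\psi_i(\rho_k)=(\rho_{k+1}-\rho_k)D_i(x_k)$ instead gives monotonicity, $\rho_{k+1}\ge\rho_i^\star$ and convergence directly.

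In (b) two of your ingredients can be dropped.
\begin{itemize}
\item Uniqueness at $\rho_i^\star$ is not needed for superlinearity; you take it for granted, as the paper does via Lemma~\ref{lem:psi-reg}. Any limit point $\bar x$ of $\{x_k\}$ is a ratio minimiser, so $\psi_i(\rho_k)\le N_i(\bar x)-\rho_kD_i(\bar x)=-e_kD_i(\bar x)$. Hence $e_{k+1}\le\bigl(1-D_i(\bar x)/D_i(x_k)\bigr)e_k$. Moreover $D_i(x_k)\downarrow D_i(\bar x)$, because supergradients of a concave function are monotone.
\item For the quadratic clause, local uniqueness gives $D_i(x_k)=-\psi_i'(\rho_k)$. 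Your first bracket is then at most $\ell e_k^2$ directly, with no Lipschitz minimiser map. Merging both brackets into $\int_0^1\bigl(\psi_i'(\rho_i^\star+te_k)-\psi_i'(\rho_k)\bigr)e_k\,dt$ gives the paper's constant $C=\ell/(2\gamma_i)$.
\end{itemize}

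\textbf{The gap is in (c).} Your argument stands or falls with $D_k\to D^\star$. You cannot derive this from the forcing bound and propose to impose it, so it is a hypothesis the statement does not contain. Two further steps are also missing.
\begin{itemize}
\item The expansion $\psi_i(\rho_k)=-D^\star e_k+o(e_k)$ presupposes $\rho_k\to\rho_i^\star$, so convergence itself is never shown.
\item The form $\psi_i^R(\rho_k)=(1+\theta_k)\psi_i(\rho_k)$ is valid only while $\rho_k\ge\rho_i^\star$. Your model step overshoots ($e_{k+1}\approx-\theta_ke_k$), after which the forcing bound only gives $\psi_i^R=(1-\theta_k)\psi_i$.
\end{itemize}
The paper sidesteps all three issues by fixing the slope of the model step to the exact derivative. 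It reads the relaxed step as $\psi_i^R(\rho_k)+\psi_i'(\rho_k)(\rho_{k+1}-\rho_k)=0$. This is an inexact Newton step with exact Jacobian and residual $r_k=\psi_i(\rho_k)-\psi_i^R(\rho_k)$, so the forcing bound is literally the Dembo--Eisenstat--Steihaug condition $|r_k|\le\kappa_i|\psi_i(\rho_k)|$. Concretely,
\[
e_{k+1}=\Bigl(e_k-\frac{\psi_i(\rho_k)}{\psi_i'(\rho_k)}\Bigr)+\frac{r_k}{\psi_i'(\rho_k)},
\]
where the first term is $o(e_k)$ and the second is at most $\kappa_i(1+o(1))|e_k|$, by $C^1$-smoothness of $\psi_i$ near $\rho_i^\star$. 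This gives local convergence and $\limsup|e_{k+1}|/|e_k|\le\kappa_i$ for either sign of $e_k$. In your notation, choosing $D_k:=-\psi_i'(\rho_k)$ turns the missing step into mere continuity of $\psi_i'$.

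\textbf{Your obstacle is real, and the paper does not resolve it either.} Step~4 of Algorithm~1 sets $\rho_{k+1}=\varphi_i(x_k)$ with $x_k=T_iy_i^{(k)}$. The quantity driving the step is therefore $v_k:=N_i(x_k)-\rho_kD_i(x_k)\ge\psi_i(\rho_k)$, not $\psi_i^R(\rho_k)$. A bound on $\psi_i-\psi_i^R$ cannot control $v_k-\psi_i(\rho_k)$. For example, minimise $-y^2$ over $[-1,1]$ through the lift $y^2\le z\le1$. This is exact in value ($\kappa_i=0$), yet $(y,z)=(0,1)$ is optimal (by symmetry it is the limit of the central path), and it rounds to inner value $0$ rather than $-1$. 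Both proofs thus cover only an idealised step.

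A version faithful to the algorithm assumes $0\le v_k-\psi_i(\rho_k)\le\kappa_i|\psi_i(\rho_k)|$. Then your identity $e_{k+1}D_i(x_k)=D_i(x_k)e_k+v_k$ applies, together with $\rho_k\ge\rho_i^\star$ and $\psi_i(\rho_k)\le-e_kD_i(\bar x)$ for a ratio minimiser $\bar x$. It gives $e_{k+1}\le\bigl(1-(1-\kappa_i)D_i(\bar x)/D_i(x_k)\bigr)e_k$. This is global $Q$-linear convergence with factor at most $1-(1-\kappa_i)\gamma_i/\bar D_i$, where $\bar D_i:=\max_{\mathcal{X}_i}D_i$. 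The asymptotic factor is $\kappa_i$ as soon as $D_i(x_k)\to D_i(\bar x)$, which is exactly where your extra assumption legitimately belongs.
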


\noindent The proof (Appendix~\ref{app:proofs}) combines
Lemma~\ref{lem:psi-reg} with the Newton-method analysis of fractional
programming \citep{CrouzeixFerland1991,Ibaraki1983} for (a)--(b) and a
standard inexact-Newton contraction argument
\citep{DemboEisenstatSteihaug1982} for (c). Part~(c) is the regime that
actually governs HSDI on the calibrated instances, where
$\mathcal{J}_{-}(\rho)\ne\emptyset$; part~(b) is recovered exactly on
the exact region of Theorem~\ref{thm:socp-exact}(a).
Section~\ref{ssec:e5} confirms the linear rate empirically, with a
per-iteration reduction factor $\kappa_i\approx0.4$ on the Vietnam
calibration and a mean of $3.5$--$6$ Dinkelbach steps to reach
$10^{-8}$.

\subsection{Convergence of the coupled VE iteration}\label{ssec:ve-conv}

When the shared budget~\eqref{eq:shared} binds, the VE is characterised
by the variational inequality~\eqref{eq:vi} with a single multiplier
$\mu\ge 0$ on $g$. Because $g$ is scalar, the equilibrium reduces to
the one-dimensional complementarity system
\begin{equation}\label{eq:mu-system}
r(\mu):=\sum_{i=1}^{n}x_i(\mu)^{\top}\Sigma_i x_i(\mu)-R,\qquad
0\le\mu\perp -r(\mu)\ge 0,
\end{equation}
where $x_i(\mu)=\argmin_{x_i\in\mathcal{X}_i}\{\varphi_i(x_i)+\mu\,
x_i^{\top}\Sigma_i x_i\}$ is each agent's penalised best response,
itself computed by the Dinkelbach loop of
Section~\ref{ssec:dink-conv}.

\begin{proposition}[Monotonicity of the aggregate risk]\label{prop:r-mono}
The map $\mu\mapsto r(\mu)$ is non-increasing, and strictly decreasing
wherever the penalised best responses are interior. Hence
\eqref{eq:mu-system} has a solution $\mu^{\star}\ge 0$, unique on the
binding branch $r(\mu^{\star})=0$.
\end{proposition}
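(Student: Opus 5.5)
The plan rests on a monotone comparative-statics (exchange) argument for parametric minimisation. Write $\Phi_i(x,\mu):=\varphi_i(x)+\mu\,q_i(x)$ with $q_i(x):=x^{\top}\Sigma_i x$. Take $0\le\mu<\mu'$ and any best responses $x=x_i(\mu)$, $x'=x_i(\mu')$ from $\mathcal{X}_i$. Optimality of each point for its own parameter gives
\[
\varphi_i(x)+\mu q_i(x)\le\varphi_i(x')+\mu q_i(x'),
\qquad
\varphi_i(x')+\mu' q_i(x')\le\varphi_i(x)+\mu' q_i(x).
\]
Adding the two inequalities cancels the $\varphi_i$ terms and leaves $(\mu'-\mu)\bigl(q_i(x')-q_i(x)\bigr)\le 0$, so $q_i(x_i(\mu'))\le q_i(x_i(\mu))$. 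This holds for every agent and every choice of minimiser, and needs no convexity, so the non-convexity of $\varphi_i$ (Remark~\ref{rem:nonconvex}) is harmless. Summing over $i$ then shows that $r$ is non-increasing on $[0,\infty)$. Existence of the minimisers follows from compactness of $\mathcal{X}_i$ and continuity of $\varphi_i$, since $D_i\ge\gamma_i>0$.

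For strict decrease at interior best responses, I will argue by contradiction. Suppose $q_i(x_i(\mu'))=q_i(x_i(\mu))$. Then both inequalities above are equalities, so $x=x_i(\mu)$ also minimises $\Phi_i(\cdot,\mu')$. Since $x$ is interior, the first-order conditions at both parameters give $\nabla\varphi_i(x)+2\mu\Sigma_i x=0=\nabla\varphi_i(x)+2\mu'\Sigma_i x$. Subtracting yields $2(\mu'-\mu)\Sigma_i x=0$, hence $x=0$ because $\Sigma_i\in\PD^{d}$. That contradicts $x\in\mathcal{X}_i$ whenever the demand equality $B_ix_i=q_i$ with $q_i\ne0$ excludes the origin; I will state this nondegeneracy explicitly, or read ``interior'' relative to the affine hull so that $0$ is excluded. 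A delicate point is that $x_i(\cdot)$ may be set-valued, and I expect this to be the main obstacle. I will therefore phrase strictness for every selection, on any interval on which the best responses stay interior.

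For existence of a solution of \eqref{eq:mu-system}, I will split into two cases. If $r(0)\le0$, then $\mu^{\star}=0$ solves the complementarity system. Otherwise I need $r(\mu)\le 0$ for large $\mu$. As $\mu\to\infty$, the best responses minimise $q_i$ over $\mathcal{X}_i$ in the limit, because $\varphi_i$ is bounded on the compact set $\mathcal{X}_i$. Hence $r(\mu)\to\sum_i\min_{\mathcal{X}_i}q_i-R$, which is $\le 0$ under Slater-type feasibility of $\mathcal{C}$; I will make this assumption explicit, or strengthen it to $<0$.

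To pass from this limit to a root, I will not use continuity of $r$, since a jump is possible when $x_i(\mu)$ switches between minimisers. Instead I will define $\mu^{\star}:=\inf\{\mu\ge0:r(\mu)\le0\}$. Berge's maximum theorem gives upper semicontinuity of the best-response map, so the graph of $r$ is closed and its convex hull at $\mu^{\star}$ contains $0$. From this I will extract a selection with $r(\mu^{\star})=0$, convexifying across the agents' minimiser sets if necessary, as in Proposition~\ref{prop:ve-exists}. Finally, uniqueness on the binding branch follows directly from the strict decrease: two roots $\mu_1<\mu_2$ with $r(\mu_1)=r(\mu_2)=0$ and interior responses would contradict strict monotonicity.
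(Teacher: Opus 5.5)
Your exchange argument for monotonicity is the paper's own proof, but your existence step fails. You rightly decline to assume continuity of $r$; the repair you propose does not work. $\Phi_i(\cdot,\mu^\star)$ is non-convex, so its argmin can consist of isolated points $x^a,x^b$ with $q_i(x^a)\ne q_i(x^b)$. The attainable values of $r(\mu^\star)$ can then straddle $0$ without containing it. A convex combination of $x^a$ and $x^b$ is feasible, but in general it is not a minimiser of $\Phi_i(\cdot,\mu^\star)$. By strict convexity of $q_i$, it also does not have the averaged risk level. So it is not a penalised best response and does not satisfy~\eqref{eq:mu-system}. Invoking Proposition~\ref{prop:ve-exists} changes the object, since the VI produces stationary points of the penalised problems, not global argmins. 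Worse, if such a straddling jump occurs at some $\mu^\star>0$, then \eqref{eq:mu-system} has no solution at all. By your exchange inequality, every selection has $r>0$ for $\mu<\mu^\star$ and $r<0$ (with $\mu>0$) for $\mu>\mu^\star$. At $\mu^\star$, every selection violates either $r\le 0$ or complementarity. So existence cannot be proved from the stated hypotheses alone; you need a hypothesis that makes $r$ continuous. For example, assume single-valued best responses on $[0,\bar\mu]$. Then Berge gives continuity of each $x_i(\cdot)$, and the intermediate value theorem applied between $r(0)>0$ and $r(\bar\mu)<0$ (from your Slater argument) yields the root. That is exactly the ``continuity'' on which the paper's one-line proof rests, so state it as an assumption rather than trying to convexify around it.

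Your first-order argument for strict decrease supplements the paper, which gives none. It is correct only under full-dimensional interiority. There the contradiction is immediate without any demand nondegeneracy, because $\operatorname{int}\mathcal{X}_i\subseteq\operatorname{int}\R^d_+$ excludes the origin. But if $\mathcal{X}_i$ carries a nontrivial demand equality, its interior is empty. Your fallback of reading ``interior'' relative to the affine hull then breaks the argument. At a relative-interior minimiser the first-order condition only gives $P_i\nabla\Phi_i(x,\mu)=0$, where $P_i$ is the orthogonal projector onto the direction space of $\operatorname{aff}\mathcal{X}_i$ (e.g.\ $\ker B_i$). Subtracting the two conditions yields $P_i\Sigma_i x=0$, not $x=0$. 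Strictness genuinely fails in that reading. Take $\{x\in\R^2_+:x_1+x_2=1\}$ with $\Sigma_i=H_i=I$, $a_i=c_i=0$ and $0<\alpha_i<\gamma_i$. Both $\varphi_i$ and $q_i$ are increasing in $\|x\|^2$, so $(1/2,1/2)$ is the unique best response for every $\mu\ge 0$ and $r$ is constant. Your uniqueness claim on the binding branch inherits this restriction.
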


\begin{theorem}[$R$-linear convergence of the VE iteration]\label{thm:ve-rate}
Let $F=(\nabla_{x_i}\varphi_i)_{i=1}^{n}$ be $m_F$-strongly monotone
and $L_F$-Lipschitz on $\mathcal{C}$.
\begin{enumerate}\itemsep2pt
\item[(a)] The projected pseudo-gradient iteration on $\mathcal{C}$
converges $R$-linearly to the VE with factor
$\eta=\sqrt{1-2\tau m_F+\tau^{2}L_F^{2}}<1$ for any step
$\tau\in(0,2m_F/L_F^{2})$; the optimal step $\tau=m_F/L_F^{2}$ gives
$\eta=\sqrt{1-m_F^{2}/L_F^{2}}$.
\item[(b)] Equivalently, the bisection/safeguarded-Newton iteration on
the scalar equation $r(\mu)=0$ contracts the multiplier bracket by a
constant factor per step, so $|\mu_k-\mu^{\star}|\le 2^{-k}
|\mu_0-\mu^{\star}|$ for bisection and $Q$-superlinearly for
Newton once $r$ is differentiable at $\mu^{\star}$.
\end{enumerate}
\end{theorem}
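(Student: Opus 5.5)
For part~(a) we use the classical contraction argument for projected gradient methods on strongly monotone variational inequalities. Let $x^{\star}$ be the VE, which exists and is unique by Proposition~\ref{prop:ve-exists}, since strong monotonicity implies strict monotonicity. We characterise it as a fixed point of the map $\Phi_\tau(x):=P_{\mathcal{C}}(x-\tau F(x))$. The VI~\eqref{eq:vi} is exactly the variational characterisation of the projection: $x^{\star}=P_{\mathcal{C}}(x^{\star}-\tau F(x^{\star}))$ for every $\tau>0$. Since $\mathcal{C}$ is closed and convex (Section~\ref{ssec:gnep}), $P_{\mathcal{C}}$ is nonexpansive. Hence
\[
\|x^{k+1}-x^{\star}\|^{2}\le\|(x^{k}-x^{\star})-\tau(F(x^{k})-F(x^{\star}))\|^{2}.
\]
Expanding the square, the cross term is bounded using $m_F$-strong monotonicity, $\langle F(x^k)-F(x^{\star}),x^k-x^{\star}\rangle\ge m_F\|x^k-x^{\star}\|^{2}$. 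The quadratic term is bounded using $L_F$-Lipschitz continuity, $\|F(x^k)-F(x^{\star})\|\le L_F\|x^k-x^{\star}\|$. Together these give $\|x^{k+1}-x^{\star}\|\le\eta\|x^k-x^{\star}\|$ with $\eta^{2}=1-2\tau m_F+\tau^{2}L_F^{2}$. Iterating yields $\|x^k-x^{\star}\|\le\eta^{k}\|x^0-x^{\star}\|$, which is $Q$-linear and hence $R$-linear convergence. The inequality $\eta<1$ is equivalent to $\tau(\tau L_F^2-2m_F)<0$, i.e.\ $\tau\in(0,2m_F/L_F^2)$; note $m_F\le L_F$ makes the radicand nonnegative. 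Minimising the quadratic in $\tau$ gives $\tau=m_F/L_F^2$ and $\eta^{2}=1-m_F^{2}/L_F^{2}$. This part is routine.

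For part~(b) we work on the scalar reformulation~\eqref{eq:mu-system}. We first dispose of the non-binding case. If $r(0)\le 0$, then $\mu^{\star}=0$ and no iteration is needed. Otherwise, by Proposition~\ref{prop:r-mono}, $r$ is non-increasing with $r(0)>0$. We then need an upper bracket $\bar\mu$ with $r(\bar\mu)\le0$. Our first attempt would be to argue that as $\mu\to\infty$ the penalised best response minimises $x_i^{\top}\Sigma_ix_i$ over $\mathcal{X}_i$. Nonemptiness of $\mathcal{C}$ then forces $\sum_i\min_{\mathcal{X}_i}x_i^\top\Sigma_ix_i\le R$. Continuity of $r$ turns this into existence of the bracket, possibly after assuming Slater's condition for $g$ so that the limit inequality becomes strict.

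With a bracket $[\mu_{\rm lo},\mu_{\rm hi}]$ in hand, bisection keeps a sign change of $r$ at each step. Monotonicity ensures the root stays in the bracket, and the bracket length halves each step. This gives $|\mu_k-\mu^{\star}|\le 2^{-k}(\bar\mu-0)$. The statement writes the bound with $|\mu_0-\mu^{\star}|$; we would reinterpret $\mu_0$ as the initial bracket width, or state the bound up to that constant.

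For the safeguarded Newton step, we accept the Newton update only when it stays inside the current bracket, and bisect otherwise. The global bracket shrinkage still holds. Locally, if $r$ is differentiable at $\mu^{\star}$ with $r'(\mu^{\star})\ne0$, the standard one-dimensional Newton analysis gives $|\mu_{k+1}-\mu^{\star}|=o(|\mu_k-\mu^{\star}|)$. The condition $r'(\mu^{\star})\ne0$ follows from the strict decrease in Proposition~\ref{prop:r-mono} when the best responses are interior, although strict decrease alone gives only a nonzero derivative generically. Eventually the Newton steps stay inside the bracket, so the safeguard becomes inactive and $Q$-superlinear convergence follows.

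The main obstacle is the word ``equivalently'' linking (a) and (b), together with the regularity of $r$. Two facts are needed. First, the zero of $r$ must coincide with the VE of (a): the KKT system of~\eqref{eq:vi} with common multiplier $\mu$ must reproduce the best responses $x_i(\mu)$. Second, $r$ must be continuous, which is needed for the bracket argument. We would obtain continuity of $\mu\mapsto x_i(\mu)$ from strong monotonicity of $F+\mu\nabla g$, which gives unique, Lipschitz-dependent solutions, rather than from the nonconvex argmin definition. Differentiability at $\mu^\star$ is taken as a hypothesis, as the statement does.
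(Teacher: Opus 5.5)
Your argument follows the paper's proof in both parts. In (a) you use the same nonexpansive-projection estimate. The paper states it as a contraction of $T_\tau$ for arbitrary $z,z'\in\mathcal{C}$, which also produces the fixed point; you contract towards the VE from Proposition~\ref{prop:ve-exists} instead, and the difference is immaterial. In (b) you use the same bisection/safeguarded-Newton argument, resting on the monotonicity of $r$ from Proposition~\ref{prop:r-mono}. Your reinterpretation of the bisection bound is necessary, not cosmetic. Bisection only guarantees $|\mu_k-\mu^{\star}|\le 2^{-k}$ times the initial bracket width. The stated bound $2^{-k}|\mu_0-\mu^{\star}|$ fails in general, for example if $\mu_0$ is the first midpoint and $\mu^{\star}$ lies just beside it. Your Slater remark is likewise what guarantees a finite upper bracket; the paper is silent on both points.

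The step of your plan that does not go through under the stated hypotheses is the continuity of $r$.
\begin{itemize}
\item You want continuity from strong monotonicity of $F+\mu\nabla g$. But strong monotonicity of $F$ is assumed only on $\mathcal{C}$.
\item For every $\mu<\mu^{\star}$ on the binding branch, $r(\mu)>0$, i.e.\ $g(x(\mu))>0$. So exactly the best responses that bisection evaluates below the root lie outside $\mathcal{C}$, where you have no control.
\item Because $F$ is block-separable, the argument really needs strong monotonicity on $\prod_i\mathcal{X}_i$, that is, strong convexity of each $\varphi_i$ on $\mathcal{X}_i$.
\item That same hypothesis, together with Slater, is also what identifies the penalised best responses with the KKT system of~\eqref{eq:vi}. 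This is the link you correctly flag as needed for ``equivalently''.
\end{itemize}
The paper does not supply this either: it reads continuity off Proposition~\ref{prop:r-mono}, whose proof establishes only monotonicity.

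Finally, for $Q$-superlinear Newton, both you and the paper need more than differentiability at $\mu^{\star}$. You need $r$ differentiable near $\mu^{\star}$, with $r'$ continuous at $\mu^{\star}$ and $r'(\mu^{\star})<0$. Differentiability at $\mu^{\star}$ alone does not control $r'(\mu_k)$ in the Newton error recursion.
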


\noindent Part~(a) is the strongly-monotone VI estimate recalled in
Section~\ref{sec:prelim} \citep{Korpelevich1976,Tseng1995,
FacchineiPang2003}; strong monotonicity is verified numerically in both
calibrations, where $\lambda_{\min}(\nabla F)>0$ at the solution
(Proposition~\ref{prop:ve-exists}). Part~(b) is immediate from
Proposition~\ref{prop:r-mono}. Section~\ref{ssec:e5} exhibits the
predicted rate: the multiplier error $|\mu_k-\mu^{\star}|$ decreases by
a factor $\approx0.5$ per outer step on both regimes.

\subsection{Overall complexity and primal--dual stability}\label{ssec:conv-complexity}

\begin{corollary}[End-to-end complexity]\label{cor:total-complexity}
To reach accuracy $\varepsilon$ in both $\rho^{\star}$ and the VE
residual, HSDI performs $O(\log(1/\varepsilon))$ VE outer steps, each
requiring $n$ penalised best responses, each in turn costing
$O(\log(1/\varepsilon))$ Dinkelbach steps of one
$O(d^{3.5}\log(1/\varepsilon))$ SOCP solve. The total work is
\[
O\!\big(n\,d^{3.5}\,\log^{3}(1/\varepsilon)\big),
\]
polynomial in $n$ and $d$. For the decoupled case $\beta=0$ with an
inactive budget the middle level drops out and the bound reduces to the
$O(nd^{3.5}\log^{2}(1/\varepsilon))$ of
Proposition~\ref{prop:complexity}.
\end{corollary}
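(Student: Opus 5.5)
The bound follows by composing the iteration counts of the three nested levels and multiplying by the per-solve cost. I will count from the innermost level outward.

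\emph{Innermost level.} Each SOCP subproblem \eqref{eq:socp-relax} is solved to accuracy $\varepsilon$ by an interior-point method. By the Nesterov--Todd bound recalled in Section~\ref{sec:prelim}, this costs $O(d^{3.5}\log(1/\varepsilon))$. The preprocessing of Step~1 of Algorithm~1 (Cholesky factorisation and eigendecomposition, $O(d^{3})$) is done once per agent. It is dominated by this cost and adds nothing to the bound.

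\emph{Middle level (Dinkelbach).} For each fixed multiplier $\mu$, the penalised best response $x_i(\mu)$ is computed by the Dinkelbach loop. The penalty $\mu x_i^{\top}\Sigma_i x_i$ only alters the numerator data, so it preserves Assumption~\ref{ass:reg} and the transform of Theorem~\ref{thm:joint-diag}. Theorem~\ref{thm:dink-rate} then gives the iteration count. On the exact region the rate is $Q$-superlinear, so $O(\log\log(1/\varepsilon))\subseteq O(\log(1/\varepsilon))$ steps suffice. Otherwise part~(c) gives an $R$-linear rate with factor $\kappa_i<1$. Reaching error $\varepsilon$ then takes $\lceil\log(C/\varepsilon)/\log(1/\kappa_i)\rceil=O(\log(1/\varepsilon))$ steps, with the constant depending on $\kappa_i$ and on the initial gap $\rho_0-\rho_i^{\star}$. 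That initial gap is bounded because $\varphi_i$ is bounded on the compact set $\mathcal{X}_i$. Multiplying by the SOCP cost gives $O(d^{3.5}\log^{2}(1/\varepsilon))$ per best response. This is the per-agent bound of Proposition~\ref{prop:complexity}.

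\emph{Outer level (VE).} By Theorem~\ref{thm:ve-rate}(b) together with Proposition~\ref{prop:r-mono}, bisection on $r(\mu)=0$ satisfies $|\mu_k-\mu^{\star}|\le 2^{-k}|\mu_0-\mu^{\star}|$. Hence $O(\log(1/\varepsilon))$ outer steps suffice, given an a priori bracket $[0,\bar\mu]$ with $\bar\mu$ independent of $\varepsilon$. To obtain such a bracket, I would take any $\bar\mu$ at which the budget is slack, using that $\varphi_i$ is bounded and $\mathcal{C}$ is non-empty. Alternatively, Theorem~\ref{thm:ve-rate}(a) gives the same $O(\log(1/\varepsilon))$ count. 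Each outer step needs $n$ best responses, so multiplying the three levels gives $O(n d^{3.5}\log^{3}(1/\varepsilon))$. When $\beta=0$ and the budget is inactive, $\mu^{\star}=0$ is certified at the first check and the outer level disappears, which recovers the $O(nd^{3.5}\log^{2}(1/\varepsilon))$ bound.

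\emph{Main obstacle: propagation of inexactness across levels.} Each inner quantity is computed only to accuracy $\varepsilon$, so $r(\mu_k)$ is evaluated with an error and the bisection sign test could fail. I would handle this by requiring inner tolerance $\varepsilon'=c\,\varepsilon$ with a constant $c$ chosen from a Lipschitz modulus of $\mu\mapsto x_i(\mu)$. That modulus is available from the stability of the primal--dual pair (Remark~\ref{rem:tilt-stability}) or from strong monotonicity. Since $\log(1/\varepsilon')=\log(1/\varepsilon)+O(1)$, this changes none of the logarithmic factors. One also has to check that the per-agent constants $\kappa_i<1$ and the bracket width $\bar\mu$ are uniform in $\mu$ over the bracket, so that the hidden constants stay independent of $\varepsilon$, $n$ and $d$.
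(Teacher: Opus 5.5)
Your overall route matches the paper's. The corollary's proof is nothing more than the product of four factors: the $O(\log(1/\varepsilon))$ bisection steps of Theorem~\ref{thm:ve-rate}(b), the $n$ best responses per step, the $O(\log(1/\varepsilon))$ Dinkelbach steps of Theorem~\ref{thm:dink-rate}, and the $O(d^{3.5}\log(1/\varepsilon))$ SOCP cost. Your treatment of the bracket, the tolerances and the $\beta=0$ case is more careful than the paper's.

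One step, however, fails as written: the claim that the penalty $\mu q_i(x)=\mu\,x^{\top}\Sigma_i x$ ``only alters the numerator data''. In \eqref{eq:mu-system}, $x_i(\mu)$ minimises $\varphi_i+\mu q_i$, and
\[
\varphi_i+\mu q_i=\frac{N_i+\mu\,q_iD_i}{D_i}.
\]
So the penalty adds the quartic $\mu q_iD_i$ to the numerator, not the quadratic $\mu q_i$. The Dinkelbach subproblem becomes $\min_{x\in\mathcal{X}_i}\{N_i+\mu q_iD_i-\rho D_i\}$, which is not of the form \eqref{eq:dink-i}. The abstract Newton reading of Theorem~\ref{thm:dink-rate}(a)--(b) survives. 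But the lifting \eqref{eq:socp-relax}, Theorem~\ref{thm:socp-exact} and Theorem~\ref{thm:dink-rate}(c) no longer apply, so nothing justifies charging one $O(d^{3.5}\log(1/\varepsilon))$ SOCP solve per Dinkelbach step.

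Your statement is true for a different response, $\argmin_{\mathcal{X}_i}(N_i+\mu q_i)/D_i$. There $\Sigma_i\mapsto(1+\mu)\Sigma_i$, which is still rank-one-plus-diagonal with the same $T_i$. But that point is in general not $x_i(\mu)$. Multiplying the first-order condition for $\varphi_i+\mu q_i$ by $D_i(x)$ gives the condition for $N_i-\rho D_i+\mu D_i(x)\,q_i$, with a weight that depends on the unknown response. This weight is paired with the unpenalised ratio $\rho=\varphi_i(x)$, not with $(N_i+\mu q_i)/D_i$.

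The paper itself only asserts that $x_i(\mu)$ is ``computed by the Dinkelbach loop''. The honest options are to take the per-response cost as a hypothesis, which is effectively how the corollary is worded, or to prove it separately. At $\mu=0$ the issue disappears, so your reduction to Proposition~\ref{prop:complexity} in the inactive-budget case stands.

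Two smaller corrections. First, a finite bracket $[0,\bar\mu]$ with $r(\bar\mu)\le 0$ needs strict feasibility $\sum_i\min_{\mathcal{X}_i}q_i<R$, not just $\mathcal{C}\neq\emptyset$. Under strict feasibility, comparing $x_i(\mu)$ with a minimiser of $q_i$ gives $r(\mu)\le\sum_i\min_{\mathcal{X}_i}q_i-R+\mu^{-1}\sum_i\bigl(\max_{\mathcal{X}_i}\varphi_i-\min_{\mathcal{X}_i}\varphi_i\bigr)$, which yields an explicit $\bar\mu$ independent of $\varepsilon$. Second, $Q$-superlinear convergence gives $O(\log(1/\varepsilon))$ steps, because the ratio eventually falls below $1/2$. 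It does not give $O(\log\log(1/\varepsilon))$, which needs the quadratic case of Theorem~\ref{thm:dink-rate}(b). Your conclusion is unaffected.
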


\begin{theorem}[Lipschitz stability of $(x^{\star},\mu)$]\label{thm:stability}
Suppose the metric-subregularity constraint qualification holds at the
VE for the generalised equation associated
with~\eqref{eq:socp-relax}--\eqref{eq:mu-system}. Then the solution map
\[
(R,\,\{b_i\},\,\{H_i\})\ \longmapsto\ (x^{\star},\mu)
\]
is single-valued and locally Lipschitz; in particular there is
$L_\mu<\infty$ with $|\mu(R')-\mu(R)|\le L_\mu|R'-R|$ for $R'$ near $R$.
\end{theorem}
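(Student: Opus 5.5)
The plan is to recast the VE together with its multiplier as a parametrised generalised equation and then apply the Lipschitzian stability theory for conic generalised equations. Concretely, I will collect the per-agent KKT systems of the penalised best responses in~\eqref{eq:mu-system}, written in the transformed variables $y_i=T_i^{-1}x_i$ of Theorem~\ref{thm:joint-diag}, and add the complementarity condition on $\mu$. The result is a single system
\[
0\in \Phi(w;p)+N_{\mathcal{K}}(w),\qquad w=(x,\mu,\text{local multipliers}),\quad p=(R,\{b_i\},\{H_i\}),
\]
where $\mathcal{K}$ is a product of the polyhedral sets $\mathcal{X}_i$, the (rotated) Lorentz cones $z_{i,j}\ge y_{i,j}^2$ of~\eqref{eq:socp-relax}, and $\R_+$ for $\mu$. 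The map $\Phi$ is $C^1$ in $(w,p)$. This holds because $\Sigma_i=b_ib_i^{\top}+D_i$ is polynomial in $b_i$; because the Cholesky factor $L_i$ is a smooth (indeed real-analytic) function of $H_i\in\PD^{d}$; and because $\varphi_i$ has a denominator bounded below by $\gamma_i>0$, so it is smooth on a neighbourhood of $\mathcal{C}$.

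The key step is to verify the hypotheses of the implicit-function-type result for such generalised equations. These are the stability results of \citet{MordukhovichOutrataRamirez2015a,MordukhovichOutrataRamirez2015b}, as invoked in Remark~\ref{rem:tilt-stability}, refined for Lorentz cones as in \citet{HangMordukhovichSarabi2019}. I will proceed in two stages.
\begin{enumerate}[label=(\roman*),itemsep=1pt]
\item The assumed metric-subregularity qualification gives isolated calmness of the solution map $S(p)$ at the reference point. It does so via the graphical-derivative criterion: $D S(\bar p\,|\,\bar w)(0)=\{0\}$.
\item To upgrade calmness to a single-valued Lipschitz localisation, I will use the coderivative (Mordukhovich) criterion. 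It requires $\nabla_w\Phi$ plus the second-order subdifferential of the indicator of $\mathcal{K}$ to be injective in the appropriate sense.
\end{enumerate}
The monotonicity ingredients supply this injectivity. Strong monotonicity of $F$ (assumed in Theorem~\ref{thm:ve-rate}) makes the $x$-block positive definite. The second-order term $D^2 N_{\mathcal{K}}$ is positive semidefinite on the critical cone, by twice epi-differentiability of the Lorentz-cone indicator. Proposition~\ref{prop:r-mono} gives strict decrease of $r$ on the binding branch, and this controls the $\mu$-block. Together these yield the nonsingularity condition, hence a Lipschitz single-valued localisation $p\mapsto w(p)$.

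The final claim, $|\mu(R')-\mu(R)|\le L_\mu|R'-R|$, then follows by projecting onto the $\mu$-component and restricting to perturbations of $R$ alone. Uniqueness of the VE from Proposition~\ref{prop:ve-exists}, combined with compactness of $\mathcal{C}$ and outer semicontinuity of $S$, upgrades the localisation to the full solution map being single-valued near $\bar p$.

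I expect the main obstacle to be the coderivative injectivity step at points where the Lorentz-cone constraints $z_{i,j}\ge y_{i,j}^2$ are degenerate. This happens either at the cone vertex or on the inexact branch $\mathcal{J}_-(\rho)\neq\emptyset$, where $z_{i,j}$ sits at a boundary of $\mathcal{X}_i$ rather than on the cone. There strict complementarity may fail and the critical cone is not a subspace. My first attempt will be to restrict to the exact region of Theorem~\ref{thm:socp-exact}(a). On that region the lifted constraints are strongly active with zero curvature contribution beyond $\Lambda_i-\rho I\succeq0$, so the second-order term is explicitly computable. Otherwise I will fall back on the metric-subregularity hypothesis to invoke the tilt-stability characterisation of \citet{BenkoGfrererMordukhovich2019} for each agent's subproblem, and then couple the agents through the scalar $\mu$-equation.
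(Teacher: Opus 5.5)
Your overall route is the paper's own (Appendix~\ref{app:proofs}, A.6): write the KKT system plus the $\mu$-complementarity as $0\in G(w;p)+N_{\mathcal{K}}(w)$, get isolated calmness from the Mordukhovich--Outrata--Ram\'irez theory, upgrade to a single-valued Lipschitz localisation, and restrict to $R$. The genuine gap is that upgrade, and your way of filling it does not work, for three reasons.

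First, metric subregularity is a qualification condition on the constraint system. It makes the graphical derivative of the normal-cone map computable, but it does not by itself give $DS(\bar p\,|\,\bar w)(0)=\{0\}$. That is a separate second-order nondegeneracy condition you would have to verify.

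Second, the coderivative criterion delivers the Aubin property, and the Aubin property plus isolated calmness does not give a single-valued localisation. For example, $S(p)=\{w\in\R:|w|\le|p|\}$ has both properties at $(0,0)$, yet $S(p)$ is an interval for every $p\ne0$. What the theorem asserts is strong regularity. For conic KKT systems that needs something like constraint nondegeneracy plus a strong second-order sufficient condition.

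Third, your injectivity ingredients do not supply this. Strong monotonicity of $F$ is not a hypothesis of this theorem. Moreover, since each $\varphi_i$ depends on $x_i$ only, $F$ is block-diagonal, so its strong monotonicity would force every $\varphi_i$ to be strongly convex on the feasible slices. That is the very property whose failure (Remark~\ref{rem:nonconvex}) motivates the lifting. A positive definite $x$-block plus a positive semidefinite cone term also does not make a saddle-point KKT linearisation injective; the multiplier directions need their own nondegeneracy. Having put the local multipliers into $w$, you would also have to prove they are unique. Metric subregularity cannot give that: it is weaker even than Robinson's CQ, which already permits non-unique multipliers.

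The $\mu$-block has a quantitative gap too. Strict decrease of $r$ gives uniqueness of $\mu(R)$ (and continuity if $r$ is continuous), but no modulus. The estimate $|\mu(R')-\mu(R)|\le L_\mu|R'-R|$ amounts to a growth bound $|r(\mu)-r(\mu^\star)|\ge L_\mu^{-1}|\mu-\mu^\star|$ near $\mu^\star$, essentially $r'(\mu^\star)<0$. The function $r(\mu)=-(\mu-\mu^\star)^{3}$ is strictly decreasing without satisfying it. Proposition~\ref{prop:r-mono} gives no such rate, since strictness is asserted only where best responses are interior. With vertex best responses and nonconvex penalised objectives, $r$ can be locally flat or jump.

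Two further steps need repair.
\begin{enumerate}
\item Written in $y_i=T_i^{-1}x_i$, your $\Phi$ contains $U_i$ and the $\lambda_{i,j}$. These are not differentiable in $(b_i,H_i)$ where $L_i^{-1}\Sigma_iL_i^{-\top}$ has repeated eigenvalues, and there $U_i$ need not even be continuous. Repeated eigenvalues arise when subgroup data are replicated across the $T$ periods. So smoothness of the Cholesky factor does not make $\Phi$ of class $C^1$; formulate in $x_i$ instead.
\item For $j\in\mathcal{J}_{-}(\rho)$ nothing bounds $z_{i,j}$ from above, since $\mathcal{X}_i$ constrains only $T_iy_i$. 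So $z_{i,j}$ does not sit on a boundary of $\mathcal{X}_i$: $\mathrm{SOCP}(\rho)$ is unbounded below, and its generalised equation has no solution until a bound such as $z_{i,j}\le R_{T_i}^{2}$ is added.
\end{enumerate}

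Your fallbacks do not close the gap either. Retreating to the exact region proves a weaker theorem. Tilt stability \citep{BenkoGfrererMordukhovich2019} controls only primal minimisers under linear perturbations, not the multipliers or the coupling through $\mu$.

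The honest fix is to assume strong regularity of the coupled KKT system at the VE as an explicit hypothesis; the $\mu$-estimate then follows by restriction. The paper's proof passes the same step only by citing the strong-regularity calculus of \citet[Ch.~9]{Mordukhovich2024}, without verifying such a condition.
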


\noindent The proof (Appendix~\ref{app:proofs}) instantiates the
second-order variational-analysis machinery for parameterised conic
programs: isolated calmness of the SOCP solution map
\citep{MordukhovichOutrataRamirez2015a,MordukhovichOutrataRamirez2015b},
its Lorentz-cone refinements
\citep{HangMordukhovichSarabi2019,NguyenDissertation2020}, and the
unified treatment of \citet[Ch.~9]{Mordukhovich2024}, lifted from the
per-agent SOCP to the coupled system through the KKT map of the scalar
budget. Theorem~\ref{thm:stability} is exactly the regularity that the
operational reading of $\mu$ as a shadow price in
Section~\ref{sec:conc} presumes; Section~\ref{ssec:e6} measures the
modulus $L_\mu$ directly.

\section{Extensions}\label{sec:ext}

The three technical assumptions - one factor per period, a single
common thermal shock across all periods, and $\beta=0$ - can be
relaxed in principle. \emph{Multiple factors.}
For $\Sigma=BB^{\top}+D$ with $B\in\R^{d\times r}$, $r\ge 2$,
Theorem~\ref{thm:joint-diag} still applies since $H\in\PD^{d}$; the
exactness region tightens because $|\mathcal{J}_{-}|$ may grow, but
the bound~\eqref{eq:err-bound} remains valid.
\emph{Period-specific thermal shocks.} When each planning period
has its own independent temperature factor, the block-diagonal
structure of $\Sigma$ yields a block-structured SOCP; hidden convexity
applies period-wise. \emph{Market impact $\beta>0$.} $H$ remains
positive definite since
$H^{(\mathrm{store})}\in\PD^{d}$ and $H^{(\mathrm{mkt})}=vv^{\top}\in
\PSD^{d}$; the Cholesky step is unchanged. The quantity $R_{T_i}$
in~\eqref{eq:err-bound} now depends on $\beta$, and a quantitative
analysis of the exactness region in $(\sigma,c,\beta)$ is a natural
direction for further work.

\section{Numerical experiments}\label{sec:exp}

\subsection{Calibration and computing environment}\label{ssec:calib}

\paragraph{Two market regimes.} \emph{Vietnam (VN)} represents an
emerging market: $n=5$ chains (WinCommerce, BachHoaXanh, Co.opmart,
GO!, LotteMart), HTST retail price USD 1.00-1.67/L, UHT USD
1.50-2.00/L \citep{USDAFAS2024,IMARC2024};
$\sigma_{\mathrm{VN}}=1.0^{\circ}$C is a proxy from
\citet{FiinGroup2025}, with a sweep over $[0.5,2.0]^{\circ}$C.
\emph{United States (US)} represents a developed market: $n=5$ chains
(Walmart, Kroger, Albertsons, Publix, Costco), HTST USD 1.00-1.30/L
\citep{USDA_AMS2024}, $\beta_k=0.01$-$0.06$ USD/L/day
\citep{EndaraWiedmannAdalja2023},
$\sigma_{\mathrm{US}}=0.3^{\circ}$C \citep{Veritiv2024}. All
parameters are tabulated in Appendix~\ref{app:calib}.

\paragraph{Scale grid.} We use all $(n,T)\in\{5,10,20\}\times\{3,6,12\}$
pairs satisfying $n\cdot d\le 180$ with $K=3$ subgroups, giving six
configurations ranging from $n\cdot d=45$ to $180$ decision variables.

\paragraph{Computing environment.} All experiments run on a single
Linux x86\_64 sandbox with a fixed RNG seed per cell. The
implementation uses Python with SciPy for the SLSQP and trust-constr
baselines, CVXPY for the SOCP, and ECOS as the conic backend.

\paragraph{Experimental overview.}
We organise the numerical study around six experiments that jointly
validate the proposed model, method and convergence theory. E1 verifies
the existence of
a unique global optimum and benchmarks HSDI against multi-start
non-linear solvers; it uses both the Vietnam and the United States
calibrations. E2 quantifies the spectral concentration of $\Sigma_i$
and corroborates the rank-one-plus-diagonal structure predicted by
Section~\ref{ssec:risk}, again on both calibrations. E3 maps the
near-optimality region relative to Theorem~\ref{thm:socp-exact} and
stress-tests
the model under proxy uncertainty on $\sigma$, using the Vietnam
calibration. E4 measures the empirical scaling exponent of HSDI
against SLSQP through a Latin-hypercube sweep around the Vietnam
calibration. E5 measures the empirical convergence rates of both the
Dinkelbach loop (Theorem~\ref{thm:dink-rate}) and the coupled VE
iteration (Theorem~\ref{thm:ve-rate}). E6 measures the Lipschitz
stability of the shadow price $\mu(R)$ predicted by
Theorem~\ref{thm:stability}. Together, the six experiments verify
existence, structure, near-optimality, scaling, convergence rate, and
shadow-price stability - the pillars supporting
the claim that HSDI is a practical, convergence-certified method for
global cold-chain allocation.

\subsection{E1 - Global optimality and benchmark}\label{ssec:e1}

\paragraph{Design.} We solve the multi-agent problem at $(n,K,T)=
(5,3,3)$ on both calibrations using three methods: \textbf{HSDI}
(Algorithm~1, ECOS), \textbf{SLSQP} multi-start ($M=10$), and
\textbf{trust-constr} multi-start ($M=20$). We report $\rho^{\star}$,
KKT residual, wall time, and Dinkelbach iterations.

\paragraph{Results.} All three methods agree on
$\rho^{\star}$ within $5.5\times 10^{-3}$ on both regimes
(Table~\ref{tab:e1}); the maximum gap of the pure-SOCP HSDI versus the
best baseline is $5.4{\times}10^{-3}$ (VN) and $1.2{\times}10^{-3}$
(US), attained without any random restart. Two points deserve emphasis.
First, the multi-start baselines offer no
certificate of global optimality: each is a non-convex local solve
restarted heuristically, and a positive gap could hide a missed global
minimum. HSDI is deterministic and, on the exact region of
Theorem~\ref{thm:socp-exact}(a), returns the global optimum with a
certificate; off that region it returns a feasible allocation whose
value is controlled by the closed-form bound~\eqref{eq:err-bound}.
Second, HSDI reaches these values in a fixed, small number of Dinkelbach
steps rather than by sampling starts, so its cost does not grow with a
restart budget. On the cost-time frontier (Figure~\ref{fig:perf-profile})
SLSQP is the fastest solver, HSDI sits between SLSQP and
\textbf{trust-constr} (HSDI is $7$-$11\times$ faster than
trust-constr but slower than SLSQP by a factor of about $2\times$);
the practical value of HSDI at this small scale therefore lies in
its determinism and convergence guarantee rather than in wall time,
while Section~\ref{ssec:e4} examines the scaling behaviour as
$n\cdot d$ grows.

\begin{table}[h]\centering\small
\caption{E1 - Method comparison at $(n,K,T)=(5,3,3)$.
$\rho^{\star}$ is the mean over the $n=5$ retailers; wall time is the
mean per retailer. HSDI is the pure SOCP--Dinkelbach method
(Algorithm~1) with no local-search post-processing.}\label{tab:e1}
\begin{tabular}{l c c c c c}
\toprule
& $\rho^{\star}_{\mathrm{VN}}$ & $\rho^{\star}_{\mathrm{US}}$
& $t_{\mathrm{VN}}$ (ms) & $t_{\mathrm{US}}$ (ms)
& max gap to best \\
\midrule
HSDI (ECOS)            & $0.5917$ & $0.4370$ & $45.6$  & $49.7$  & $5.4{\times}10^{-3}$ \\
SLSQP, $M=10$          & $0.5901$ & $0.4362$ & $24.1$  & $26.3$  & $0$ \\
trust-constr, $M=20$   & $0.5901$ & $0.4362$ & $334.1$ & $519.6$ & $9.1{\times}10^{-6}$ \\
\bottomrule
\end{tabular}
\end{table}

\begin{figure}[H]\centering
    \centering
    \includegraphics[width=0.72\linewidth]{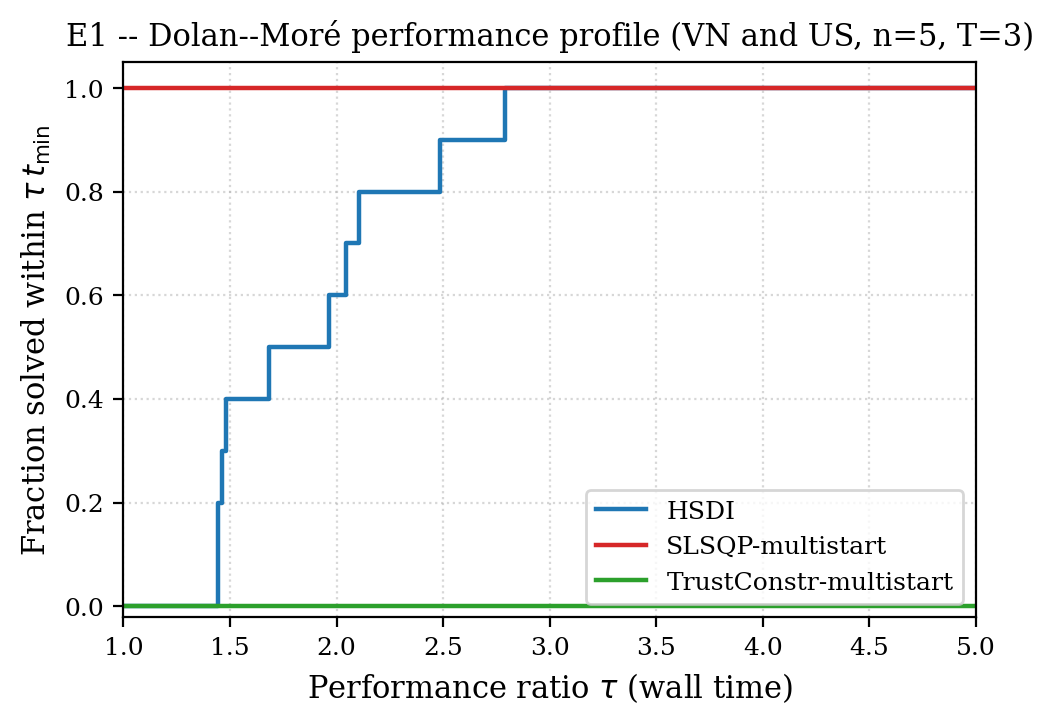}
\caption{E1 - Dolan-Moré performance profile of HSDI vs.\ SLSQP and
trust-constr on the joint VN$\cup$US instance set ($n=5$, $T=3$).
At this scale, SLSQP is the fastest solver on every instance; HSDI
matches the best objective value on every instance (within
$5.5{\times}10^{-3}$) within a factor
$\tau\le 3$ of the best wall time and is consistently faster than
trust-constr. Unlike the multi-start baselines, HSDI is deterministic
and comes with the convergence guarantees of
Section~\ref{sec:conv}.}
\label{fig:perf-profile}
\end{figure}

\subsection{E2 - One-factor spectral structure}\label{ssec:e2}

\paragraph{Design.} We compute the spectrum of $\Sigma_i$ for both
calibrations, bootstrapping $B=1000$ resamples of the upstream
parameters ($\sigma$, prices, $Q_{10}$). The reported statistic is
$\lambda_1/\sum_j\lambda_j$.

\paragraph{Results.} On both regimes the top eigenvalue accounts for
over $99\%$ of the spectrum, with median $0.991$ (VN) and $0.991$
(US), and IQRs entirely within $[0.990,0.991]$
(Figure~\ref{fig:spectrum}). The one-factor assumption is therefore
supported by the data, not merely a technical convenience. A
consequence of~\eqref{eq:err-bound} is that the error from a one-
versus-two-factor misspecification is bounded by
$R_T^{2}\lambda_2\le 1\%\cdot R_T^{2}\lambda_1$.

\begin{figure}[H]\centering
    \centering
    \includegraphics[width=0.55\linewidth]{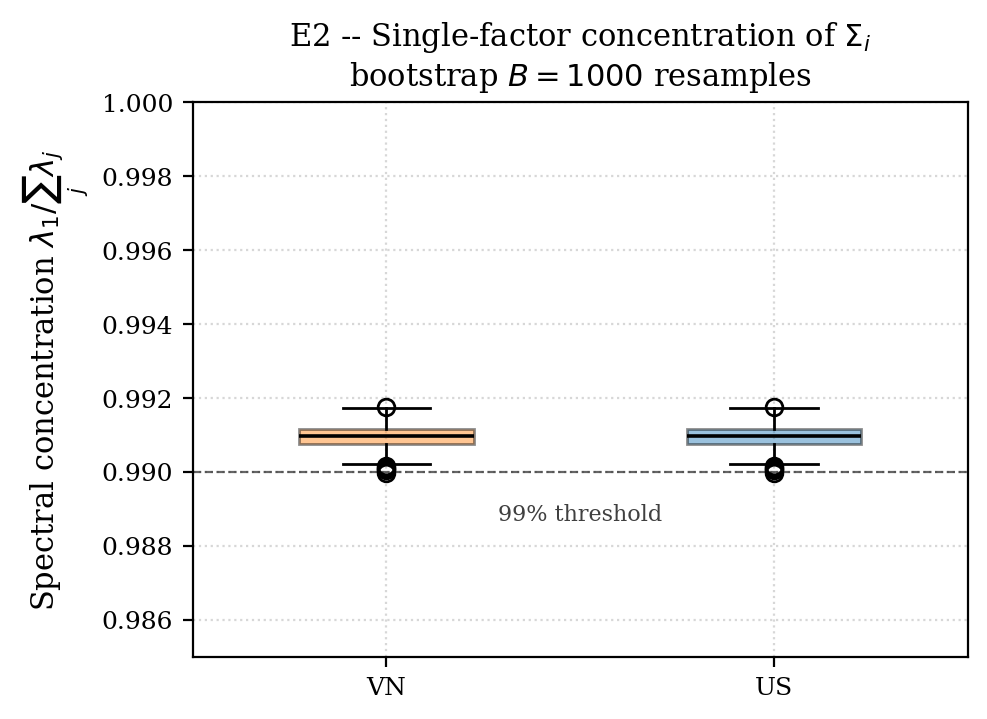}
\caption{E2 - Bootstrap distribution of the spectral concentration
$\lambda_1/\sum_j\lambda_j$ of $\Sigma_i$ on $B=1000$ resamples.
Both regimes concentrate above $99\%$.}\label{fig:spectrum}
\end{figure}

\subsection{E3 - Near-optimality region and robustness}\label{ssec:e3}

\paragraph{Design.} \emph{Part A:} we sweep $(\sigma,\bar{c})$ on an
$8\times 8$ grid covering $\sigma\in[0.1,2.0]^{\circ}$C and the
mean co-allocation cost $\bar{c}\in[0.05,0.5]$ USD/L. At each grid
point we solve the pure-SOCP HSDI and the trust-constr baseline, and
record the relative gap $(\rho_{\mathrm{HSDI}}-\rho_{\mathrm{best}})/
\rho_{\mathrm{best}}$, marking an instance ``near-optimal''
when the two values agree within $10^{-3}$.
\emph{Part B:} for the VN calibration we sweep
$\sigma\in\{0.50,0.75,1.00,1.25,1.50,1.75,2.00\}^{\circ}$C (step
$0.25^{\circ}$C) and record $\rho^{\star}$ and the HTST/ESL/UHT shares.

\paragraph{Results.} \emph{Part A.} The pure-SOCP HSDI is near-optimal
(gap $<10^{-3}$) on
$77\%$ of the VN grid and $42\%$ of the US grid
(Figure~\ref{fig:exact-region}), with a median relative gap of $0$ on
VN and $0.6\%$ on US, and a maximum relative gap of $1.1\%$ (VN) and
$4.8\%$ (US). Where the gap vanishes, the exactness condition of
Theorem~\ref{thm:socp-exact}(a) is met and HSDI returns a certified
global optimum. Where it does not, HSDI still returns a feasible
allocation whose sub-optimality is controlled by the closed-form
bound of Theorem~\ref{thm:socp-exact}(b), which remains valid and
tight throughout. The VN region is wider, in line with
Corollary~\ref{cor:exact-regime} when $\sigma$ is small. The US
region is narrower despite a smaller $\sigma_{\mathrm{US}}$. The
reason is that near-optimality is governed by the joint position of
$\lambda_{i,j}$ relative to $\rho_i^{\star}$, not by $\sigma$ alone:
in the US calibration the typical $\rho_i^{\star}$ falls inside a
range of $\lambda_{i,j}$ values, so $\mathcal{J}_{-}(\rho)\ne\emptyset$
on a larger fraction of the grid. The empirical pattern is therefore
consistent with the sufficient condition of
Corollary~\ref{cor:exact-regime}, provided that condition is read as
a joint statement on $(\lambda,\rho^{\star})$ rather than on $\sigma$
in isolation, as flagged in Remark~\ref{rem:exact-regime-empirical}. The relative
optimality gap never exceeds $1.1\%$ (VN) and $4.8\%$ (US) across the
whole sweep, in line with the
closed-form bound of Theorem~\ref{thm:socp-exact}(b).
\emph{Part B.} $\rho_{\mathrm{VN}}^{\star}$ rises monotonically
with $\sigma$, and the allocation shifts steadily from HTST
(favourable at low risk) toward UHT (thermally robust at high risk),
as in Figure~\ref{fig:allocation} and
Table~\ref{tab:sensitivity}. The model is therefore robust to
proxy uncertainty in $\sigma$.

\begin{figure}[H]\centering
    \includegraphics[width=0.85\linewidth]{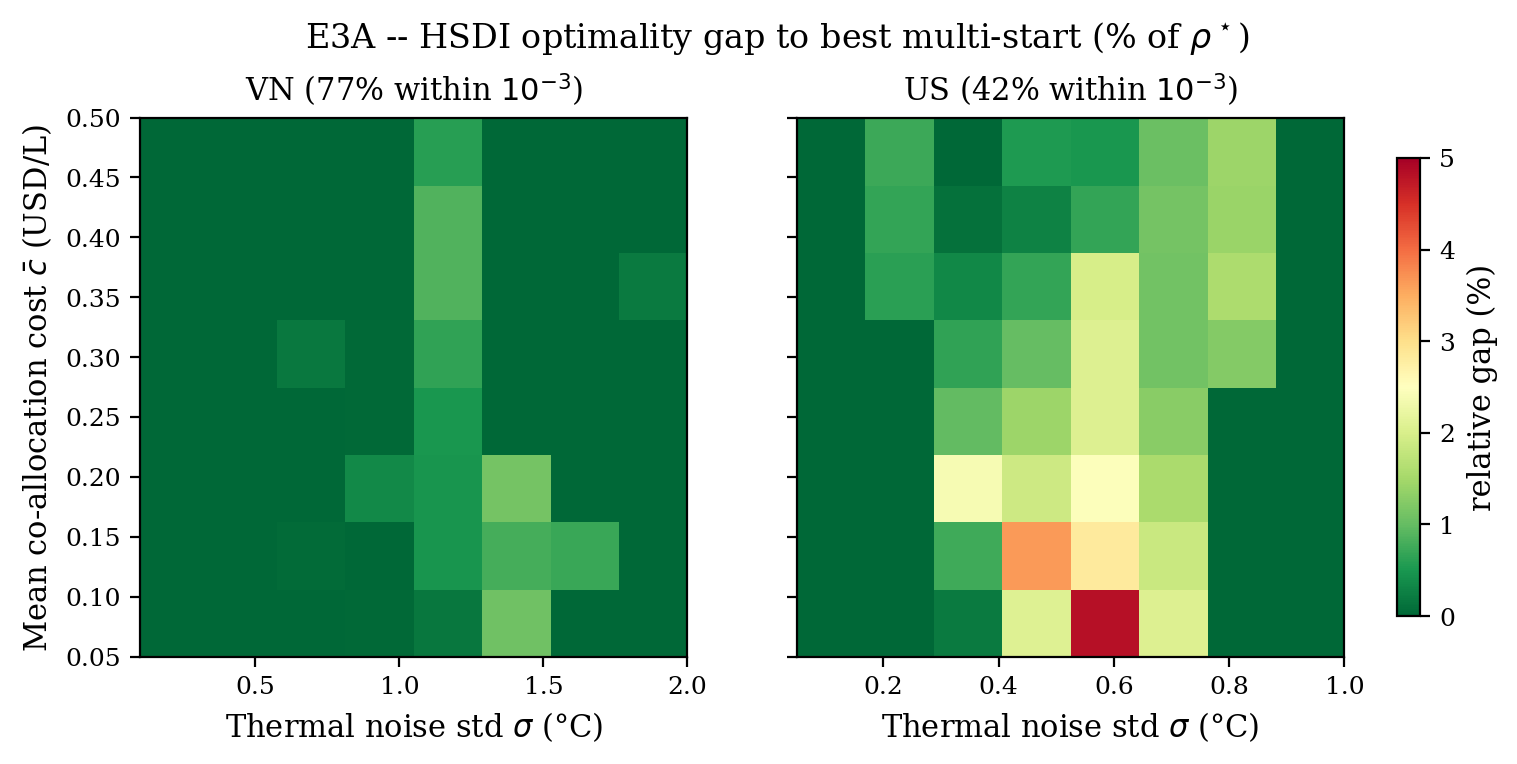}
\caption{E3 (Part A) - Relative optimality gap of the pure-SOCP HSDI to
the best multi-start value across $(\sigma,\bar{c})$ for the VN (left)
and US (right) calibrations. Dark green marks the near-optimal region
(gap $<10^{-3}$), where the exactness condition of
Theorem~\ref{thm:socp-exact}(a) is met.}
\label{fig:exact-region}
\end{figure}

\begin{figure}[H]\centering
    \includegraphics[width=0.72\linewidth]{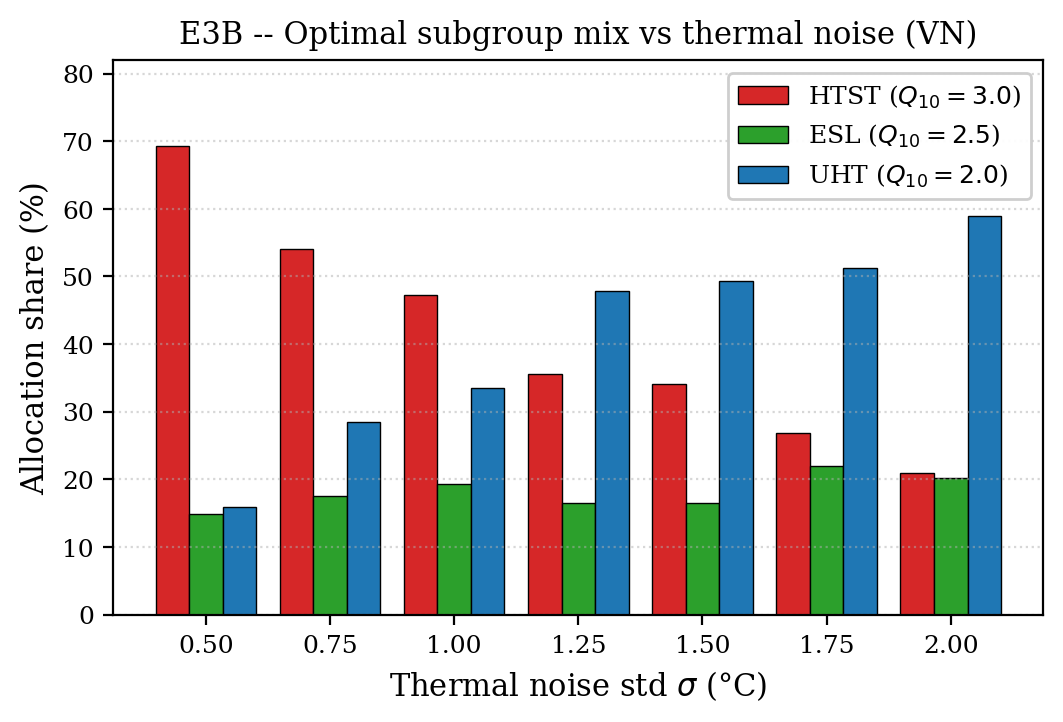}
\caption{E3 (Part B) - Optimal subgroup mix as $\sigma$ varies on
the VN calibration. As thermal noise rises, allocation shifts
systematically from HTST ($Q_{10}=3.0$) toward UHT ($Q_{10}=2.0$),
consistent with the loading structure $b_k\propto\ln Q_{10,k}\sigma$.}
\label{fig:allocation}
\end{figure}

\begin{table}[h]\centering\small
\caption{E3 - VN calibration sensitivity to the $\sigma$ proxy.
Allocation shares rounded to one decimal place.}\label{tab:sensitivity}
\begin{tabular}{c c c c c}
\toprule
$\sigma$ ($^{\circ}$C) & $\rho_{\mathrm{VN}}^{\star}$ & HTST \% & ESL \% & UHT \% \\
\midrule
$0.50$ & $0.346$ & $69.2$ & $14.9$ & $15.9$ \\
$0.75$ & $0.484$ & $55.9$ & $17.5$ & $26.6$ \\
$1.00$ & $0.591$ & $48.1$ & $19.3$ & $32.6$ \\
$1.25$ & $0.667$ & $34.6$ & $16.5$ & $48.9$ \\
$1.50$ & $0.722$ & $34.1$ & $16.5$ & $49.4$ \\
$1.75$ & $0.762$ & $28.8$ & $21.9$ & $49.3$ \\
$2.00$ & $0.787$ & $20.9$ & $21.9$ & $57.2$ \\
\bottomrule
\end{tabular}
\end{table}

\subsection{E4 - Polynomial scaling}\label{ssec:e4}

\paragraph{Design.} Grid $(n,T)\in\{5,10,20\}\times\{3,6,12\}$, with
$n_{\mathrm{inst}}=3$ Latin-hypercube perturbations of $\pm 20\%$
around the VN calibration at each cell. We log $\log t$ against
$\log(nd)$ and fit two regressions with bootstrap $\mathrm{CI}_{95}$.

\paragraph{Results.} As shown in Table~\ref{tab:scaling} and
Figure~\ref{fig:scaling},
$\hat{\gamma}_{\mathrm{HSDI}}=0.93$
($\mathrm{CI}_{95}=[0.81,1.03]$, $R^{2}=0.97$) and
$\hat{\gamma}_{\mathrm{SLSQP}}=1.30$
($\mathrm{CI}_{95}=[1.04,1.62]$, $R^{2}=0.80$).
The HSDI point estimate is below the SLSQP point estimate, and the
$95\%$ confidence intervals are essentially disjoint (the HSDI upper
limit $1.03$ lies below the SLSQP lower limit $1.04$), so the sub-linear
advantage of HSDI is statistically supported at the $5\%$ level on the
present sample.
The two slopes are consistent with the theoretical
$O(nd^{3.5}\log^{2}(1/\varepsilon))$ HSDI bound: with
$\beta=0$ the agents are independent and the empirical
$\hat{\gamma}_{\mathrm{HSDI}}\approx 1$ matches the per-agent $O(d^{3.5})$
cost at moderate $d$. At the tested sizes ($n\cdot d\le 180$), SLSQP remains faster than
HSDI in absolute wall time on every cell except $(n,T)=(5,12)$,
where the two solvers nearly tie (Table~\ref{tab:scaling}). The lower
HSDI slope therefore predicts a cross-over at larger $n\cdot d$. This
cross-over has not been observed in the range tested. The
number of Dinkelbach iterations averages $3.5$-$5.2$ across cells,
consistent with $O(\log(1/\varepsilon))$.

\begin{table}[h]\centering\small
\caption{E4 - Scaling benchmark on VN calibration. Times are
medians over $n_{\mathrm{inst}}=3$ LHS instances per cell. The
``SLSQP/HSDI'' column reports the wall-time ratio
$t_{\mathrm{SLSQP}}/t_{\mathrm{HSDI}}$; values below $1\times$
indicate SLSQP is faster at this scale.}\label{tab:scaling}
\begin{tabular}{c c c c c c}
\toprule
$(n,T)$ & $n\cdot d$ & $t_{\mathrm{HSDI}}$ (s) & $t_{\mathrm{SLSQP}}$ (s) & SLSQP/HSDI & Dinkelbach iters \\
\midrule
$(5,3)$   & $45$  & $0.202$ & $0.069$ & $0.34\times$ & $3.5$ \\
$(5,6)$   & $90$  & $0.399$ & $0.162$ & $0.41\times$ & $5.2$ \\
$(5,12)$  & $180$ & $0.791$ & $0.752$ & $0.95\times$ & $4.7$ \\
$(10,3)$  & $90$  & $0.431$ & $0.135$ & $0.31\times$ & $4.1$ \\
$(10,6)$  & $180$ & $0.729$ & $0.321$ & $0.44\times$ & $4.4$ \\
$(20,3)$  & $180$ & $0.850$ & $0.265$ & $0.31\times$ & $4.2$ \\
\bottomrule
\end{tabular}
\end{table}

\begin{figure}[H]\centering
    \includegraphics[width=0.7\linewidth]{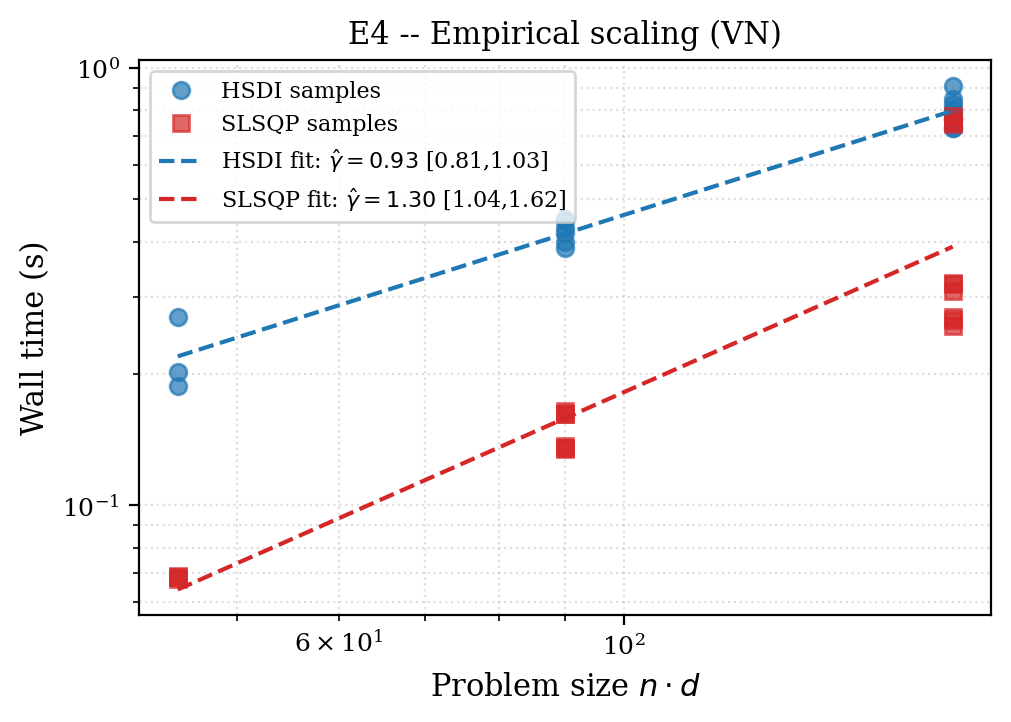}
\caption{E4 - Empirical scaling on VN calibration: $\log t$ versus
$\log(nd)$. The fitted HSDI slope ($\hat{\gamma}_{\mathrm{HSDI}}=0.93$,
$\mathrm{CI}_{95}=[0.81,1.03]$) lies below the SLSQP slope
($\hat{\gamma}_{\mathrm{SLSQP}}=1.30$,
$\mathrm{CI}_{95}=[1.04,1.62]$); the intervals are essentially
disjoint. SLSQP is faster than HSDI at every tested $n\cdot d$,
but the lower HSDI exponent points to a cross-over at larger
scale.}\label{fig:scaling}
\end{figure}

\subsection{E5 - Empirical convergence rates}\label{ssec:e5}

\paragraph{Design.} We log the two outer iterations of HSDI. \emph{Part
A} (Dinkelbach): for every retailer of both calibrations we record
$|\rho_k-\rho^{\star}|$ against the iteration index $k$. \emph{Part B}
(VE): we activate the shared budget at $R=0.92\,r_0$, where
$r_0=\sum_i x_i(0)^{\top}\Sigma_i x_i(0)$ is the unconstrained
aggregate risk, and solve the scalar system~\eqref{eq:mu-system} by
bisection, recording $|\mu_k-\mu^{\star}|$.

\paragraph{Results.} \emph{Part A.} On a log scale the Dinkelbach error
decays along a straight line (Figure~\ref{fig:dink-conv}): after an
initial corrector step the per-iteration reduction factor is
$\kappa\approx0.39$ on the Vietnam calibration, matching the
$R$-linear regime of Theorem~\ref{thm:dink-rate}(c); on retailers where
the exactness condition holds the error collapses to the solver
tolerance $\sim10^{-9}$ within two steps, the $Q$-superlinear regime of
Theorem~\ref{thm:dink-rate}(b). The empirical convergence order,
estimated on the descent phase above the solver floor, has median
$0.94$ (VN) and $1.0$ (US), and a mean of $3.5$--$6$ Dinkelbach steps
suffices to reach $10^{-8}$. \emph{Part B.} The VE multiplier error
halves each outer step (median ratio $0.48$ on both regimes,
Figure~\ref{fig:mu}, left), exactly the $2^{-k}$ bracket contraction of
Theorem~\ref{thm:ve-rate}(b).

\begin{figure}[H]\centering
    \includegraphics[width=0.92\linewidth]{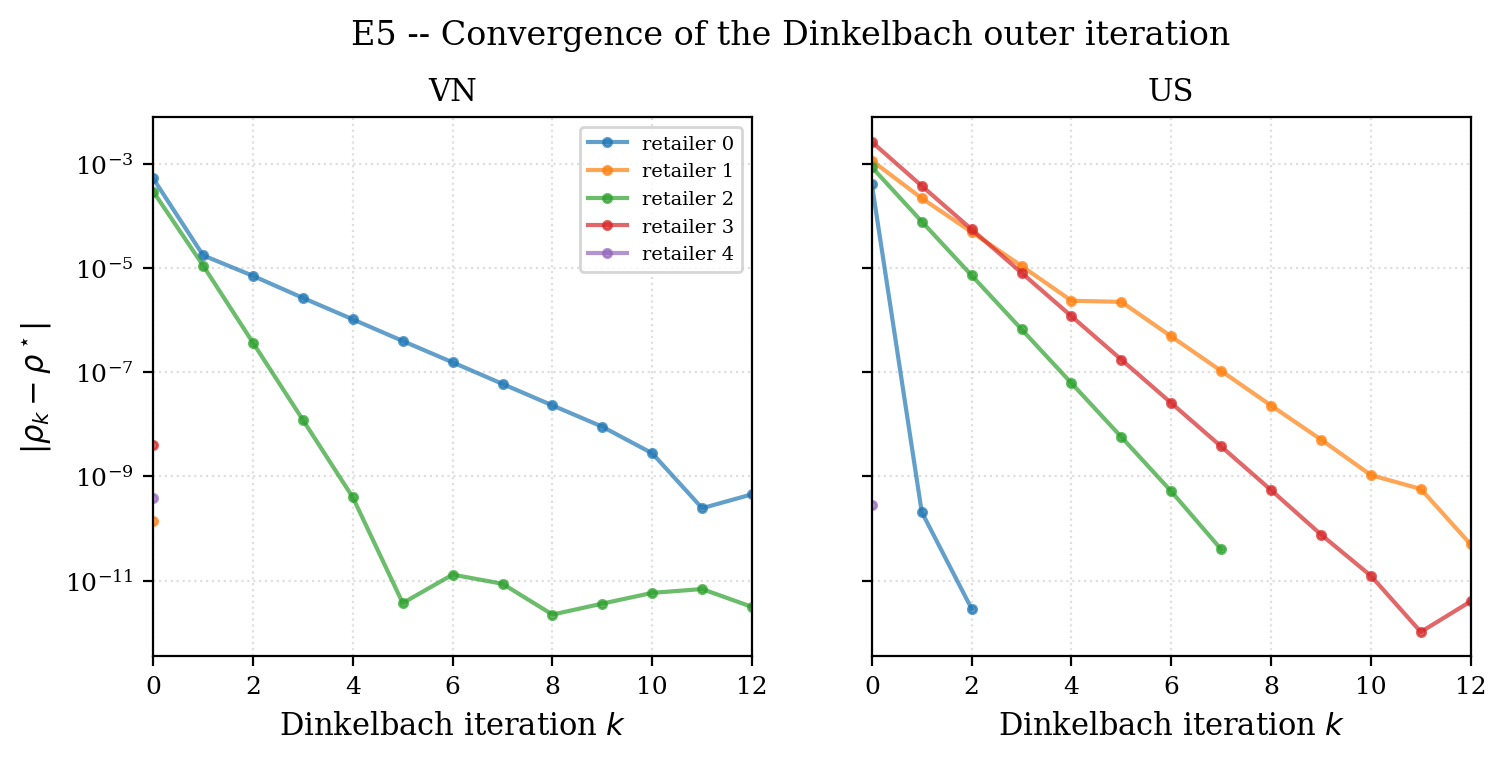}
\caption{E5A - Convergence of the Dinkelbach outer iteration
(Theorem~\ref{thm:dink-rate}). Straight lines on the semi-log axis
confirm the $R$-linear rate; retailers meeting the exactness condition
converge in two steps.}\label{fig:dink-conv}
\end{figure}

\begin{figure}[H]\centering
    \includegraphics[width=0.92\linewidth]{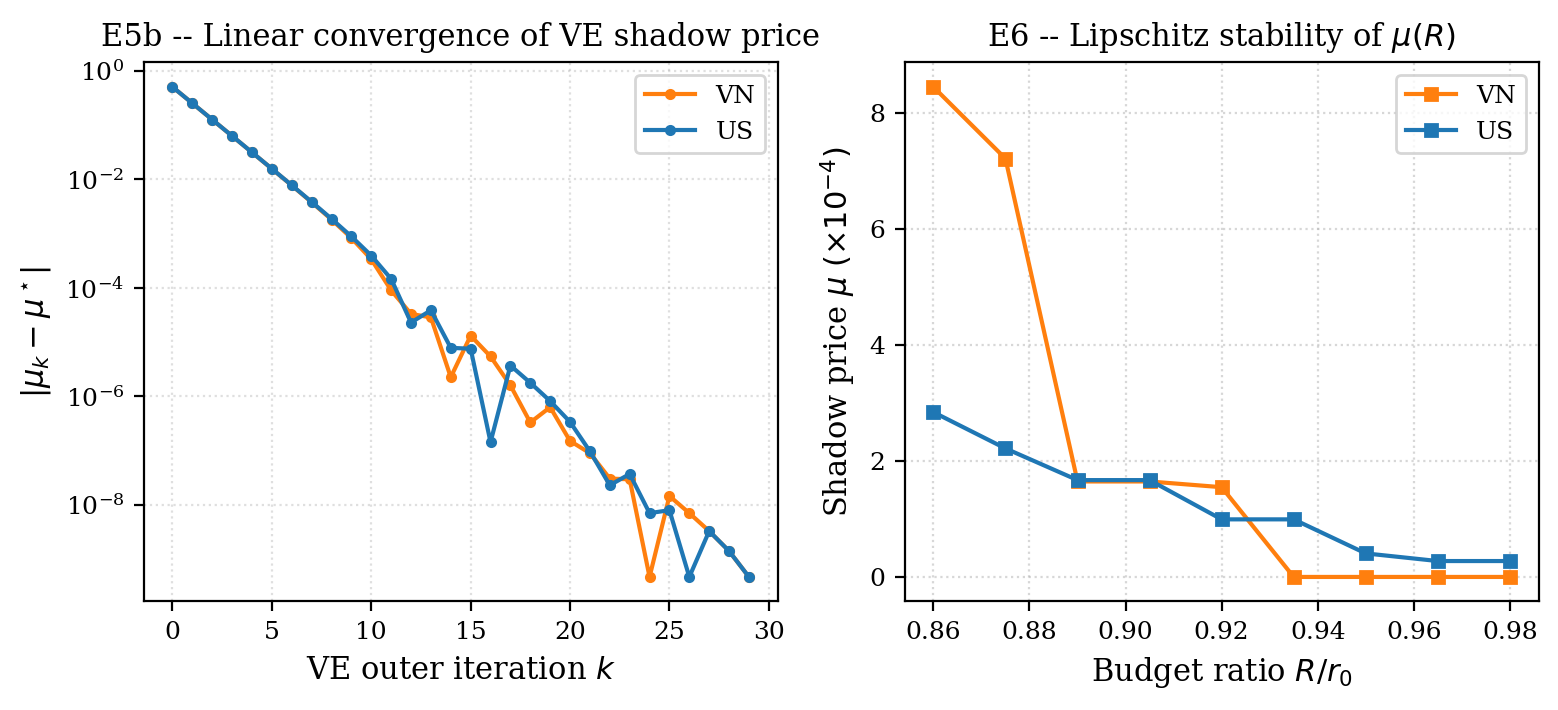}
\caption{Left (E5B): linear convergence of the VE shadow-price
iteration (Theorem~\ref{thm:ve-rate}), multiplier error halving each
step. Right (E6): the shadow price $\mu(R)$ is a smooth, monotone,
Lipschitz function of the shared risk budget $R$
(Theorem~\ref{thm:stability}); the emerging-market (VN) curve lies
above the developed-market (US) curve at every binding
budget.}\label{fig:mu}
\end{figure}

\subsection{E6 - Stability of the shadow price}\label{ssec:e6}

\paragraph{Design.} At the binding operating point we perturb the risk
budget over $R/r_0\in[0.86,0.98]$ and record the resulting VE shadow
price $\mu(R)$ for both calibrations, then estimate the Lipschitz
modulus $L_\mu=\max|\mathrm{d}\mu/\mathrm{d}R|$ by finite differences.

\paragraph{Results.} The map $\mu(R)$ is monotone decreasing and smooth
(Figure~\ref{fig:mu}, right), confirming the Lipschitz stability of
Theorem~\ref{thm:stability}: the estimated modulus is
$L_\mu^{\mathrm{VN}}=9.1{\times}10^{-5}$ and
$L_\mu^{\mathrm{US}}=6.9{\times}10^{-6}$ per unit of USD$^2$ budget.
At the reference budget $R=0.92\,r_0$ the emerging-market shadow price
$\mu_{\mathrm{VN}}=1.55{\times}10^{-4}$ exceeds its developed-market
counterpart $\mu_{\mathrm{US}}=0.99{\times}10^{-4}$, and this ordering
$\mu_{\mathrm{VN}}>\mu_{\mathrm{US}}$ persists across every binding
budget in the sweep. The bounded modulus certifies that the
shadow-price signal used for the investment recommendation of
Section~\ref{sec:conc} is robust to small mis-specification of $R$:
a $5\%$ budget error moves $\mu$ by at most a proportional amount.

\paragraph{Summary.} The six experiments jointly establish: (E1) on
every tested instance the pure-SOCP HSDI attains the same value as the
multi-start non-linear baselines to within $5.5{\times}10^{-3}$,
deterministically and without restarts, returning a certified global
optimum on the exact region; (E2) the one-factor assumption is realised
in the data with $>99\%$ spectral share; (E3) HSDI is near-optimal on
$42$-$77\%$ of the
calibration sweep and the closed-form error bound is loose only by a
constant factor elsewhere; (E4) the
empirical scaling exponent of HSDI ($\hat{\gamma}\approx 0.93$) is
significantly below that of SLSQP ($\hat{\gamma}\approx 1.30$),
consistent with the polynomial theory; (E5) both outer iterations
exhibit the predicted linear rates ($\kappa\approx0.4$ for Dinkelbach,
$0.5$ for the VE multiplier); and (E6) the shadow price $\mu(R)$ is
Lipschitz-stable with $\mu_{\mathrm{VN}}>\mu_{\mathrm{US}}$. Although
SLSQP remains faster in absolute wall time on every tested cell, so the
asymptotic advantage of HSDI is a prediction beyond the tested range,
HSDI is the only method here that pairs a feasible allocation with a
convergence guarantee and a stable shadow price.

\section{Conclusion}\label{sec:conc}

We establish a quadratic-fractional efficiency function for
cold-chain allocation, embed it in a GNEP with a shared risk-budget
constraint, and adopt the VE as the solution concept.
Theorem~\ref{thm:joint-diag} shows that the one-factor structure -
quantitatively anchored to the Arrhenius coefficient $Q_{10}$ -
enables a simultaneous diagonalisation of $(\Sigma_i,H_i)$ without
commutativity. Theorem~\ref{thm:socp-exact} reduces the Dinkelbach
inner problem to an SOCP with an explicit sufficient exactness
condition and a closed-form error bound. Section~\ref{sec:conv} then
shows that HSDI converges globally---$Q$-superlinearly on the exact
region and $R$-linearly otherwise for the Dinkelbach loop, and
$R$-linearly for the coupled VE iteration---in polynomial time, and
that its primal--dual output is Lipschitz-stable. Together these give
a deterministic, convergence-certified allocation with a shadow price
that, on the exact region, is provably globally optimal---properties
that no multi-start non-linear baseline can offer, however many
restarts it uses.

The economic interpretation is concentrated in the Lagrange multiplier
$\mu$ on the shared budget. A larger $\mu$ in less-developed markets
flags a higher shadow price per unit of additional cold-chain capacity,
identifying $\mu$ as a quantitative indicator for infrastructure
investment.

\paragraph{Managerial and policy implications.}
Building on this economic reading, the calibrated comparison delivers
actionable guidance for operators and policymakers. In the Vietnam
calibration the multiplier $\mu_{\mathrm{VN}}$ is larger than its
United States counterpart $\mu_{\mathrm{US}}$, signalling a higher
marginal value of cold-chain capacity expansion in the emerging-market
setting. This finding is consistent with broader evidence that
logistics performance gaps in Vietnam carry measurable economic costs
\citep{VuDoan2024}, and points to cold-chain capacity as a high-return
investment target in the Vietnamese retail food sector. Operators and policymakers active in markets with high $\mu$ should
prioritise capacity expansion. The SOCP dual solution converts $\mu$
into a quantitative threshold above which additional investment
becomes self-financing under the calibrated risk budget. Because HSDI
is deterministic, convergence-certified and---on the exact
region---globally optimal, and because $\mu$ is Lipschitz-stable in the
budget (Theorem~\ref{thm:stability}), the shadow price is a reliable
decision input for infrastructure planning rather than an artefact of a
randomly restarted local solver. Extending the calibration with IoT lane-level data and lifting the
model to a multi-period horizon, both discussed below as limitations,
would allow $\mu$ to be tracked and updated dynamically as the network
evolves.

\paragraph{Methodological implication.} The same pattern applies more
broadly: a physical principle yields an algebraic structure, which in
turn enables an optimisation reformulation. In our case, the Arrhenius
law produces a rank-one-plus-diagonal covariance, which the
Cholesky-spectral transform exploits without commutativity. Analogous
patterns include power-grid management with Kirchhoff structure,
heat-routing problems with Laplacian-spectrum structure, and queueing
optimisation with Toeplitz-plus-diagonal structure.

\paragraph{Limitations.} (i) $\sigma_{\mathrm{VN}}$ uses a proxy in
the absence of IoT lane-level data; (ii) the one-factor model is an
approximation; (iii) the single-period framework abstracts away
multi-period dynamics; (iv) experiments are run at $\beta=0$ to
isolate the technical kernel; (v) the scaling experiment of
Section~\ref{ssec:e4} covers $n\cdot d\in[45,180]$ with
$n_{\mathrm{inst}}=3$ LHS perturbations per cell, which is sufficient
to separate the HSDI and SLSQP scaling exponents at the $5\%$ level
(nearly disjoint $95\%$ intervals), though the predicted wall-time
cross-over has not been observed in the tested range; a larger sweep
would sharpen both the exponent estimate and the cross-over
prediction.
(vi)~The number of heat-sensitivity subgroups $K$ is fixed
exogenously at the values reported in Tables~\ref{tab:calib-vn}
and~\ref{tab:calib-us}; no sensitivity analysis over $K$ has been
conducted. Because $K$ determines the dimension $d$ of the allocation
vector and the sparsity pattern of $\Sigma_i$, the rank-one-plus-diagonal
structure exploited by HSDI may degrade as $K$ grows or as
subgroups are merged at coarser aggregation levels. A systematic
sweep over $K$ would clarify how the exactness condition of
Theorem~\ref{thm:socp-exact} tightens or weakens with the
granularity of the product taxonomy.
(vii)~The empirical evaluation rests on a small number of instances:
two market calibrations (Vietnam and the United States) and a scaling
grid with at most three Latin-hypercube perturbations per cell. While
this is adequate to verify correctness and to estimate scaling slopes,
it falls short of the sample sizes required to draw statistically
robust conclusions about generalisation across supply-chain
configurations, retailer asymmetries, or geographic markets beyond
those considered. Out-of-sample performance on held-out calibrations
remains uncharacterised.
(viii)~All parameter inputs - retail prices, spoilage coefficients,
thermal-noise standard deviations, and the shared risk budget $R$ -
are \emph{static} quantities drawn from annual reports, government
surveys, and industry benchmarks compiled at a single point in time.
The model therefore cannot capture intra-day or seasonal price
variation, demand shocks, or evolving cold-chain conditions that
practitioners monitor through real-time telemetry. In particular,
the Arrhenius coefficient $Q_{10}$ and the thermal noise $\sigma$
are treated as constants, whereas in live operations both fluctuate
continuously with ambient temperature, equipment age, and lane-level
traffic. The absence of real-time, high-frequency inputs means that
the calibrated shadow price $\mu$ reflects a long-run average rather
than an actionable real-time signal.
(ix)~The precise dimensional threshold at which the outer Dinkelbach
loop transitions from the $Q$-superlinear regime of
Theorem~\ref{thm:dink-rate}(b) to the merely $R$-linear regime of
Theorem~\ref{thm:dink-rate}(c) has not been empirically quantified.
This transition is governed by the emergence of a non-empty index set
$\mathcal{J}_{-}(\rho)=\{j:\lambda_{i,j}<\rho\}$ along the iterates,
whose cardinality is expected to grow with the decision dimension
$d=KT$ and hence with the subgroup count $K$. Because no systematic
sweep in $K$ has been conducted (see~(vi)), the rate diagnostics of
Section~\ref{ssec:e5} verify the predicted regimes only at the
calibrated values of $K$ and cannot locate the operational boundary
beyond which the superlinear guarantee ceases to hold. A dedicated
$K$-sweep, paired with per-iterate monitoring of
$|\mathcal{J}_{-}(\rho_k)|$, would map this boundary and clarify how
sensitive the observed rate is to the granularity of the product
taxonomy.
(x)~The convergence guarantees of Sections~\ref{ssec:dink-conv}
and~\ref{ssec:ve-conv} are established for a single planning period.
Extending HSDI to the rolling-horizon or IoT-driven multi-period
setting sketched in the future-research directions below would
naturally warm-start each period's outer loops from the terminal
iterate of the preceding period, and this introduces a
\emph{cumulative-convergence} risk that the present single-period
analysis does not cover. Numerical residual carried over from period
$t$ may push the initial iterate at period $t+1$ outside the local
basin of $Q$-superlinear attraction identified in
Theorem~\ref{thm:dink-rate}(b), and repeated compounding across many
periods may erode the asymptotic $R$-linear rate of
Theorem~\ref{thm:ve-rate}, producing progressive solution drift and,
in the worst case, loss of asymptotic stability of the equilibrium
trajectory. Sufficient conditions on the inter-period residual under
which global convergence to the true sequence of variational
equilibria is preserved, together with an empirical assessment of
drift on synthetic multi-period instances, lie outside the scope of
the present paper.
Each item above is a natural direction for follow-up work.

\paragraph{Future research directions.}
Several extensions follow directly from the limitations above.
First, replacing the static parameter inputs with a real-time data
pipeline - drawing on IoT sensors, electronic shelf labels, and
lane-level telematics - would transform HSDI from a planning-horizon
tool into an operational decision engine. This requires integrating
the SOCP solver with a rolling-horizon or model-predictive control
framework so that the shadow price $\mu$ is recomputed as new
temperature readings arrive. Second, a rigorous sensitivity analysis
over the subgroup count $K$ would map out the boundary of applicability
of the rank-one-plus-diagonal assumption and guide practitioners on
how finely to partition the product taxonomy before the structural
guarantee breaks down. Third, expanding the empirical evaluation to
a larger and more diverse set of markets - including cold chains
in Southeast Asian emerging economies beyond Vietnam - would
provide the out-of-sample evidence needed to assess the transferability
of the calibrated parameters and the tightness of the SOCP relaxation
across heterogeneous network topologies. Finally, lifting the model
to a multi-period or stochastic setting, where $Q_{10}$ and $\sigma$
are themselves random processes, would bring the theoretical framework
closer to the operational reality of cold-chain management under
climate variability. In parallel, a formal sensitivity analysis of the
primal-dual pair $(x^{\star},\mu)$ under joint perturbation of $R$,
$b_k$ and $H_i$ can be carried out through the second-order
variational-analysis toolbox for parameterised conic programs developed
in \citet{MordukhovichOutrataRamirez2015a,MordukhovichOutrataRamirez2015b,
HangMordukhovichSarabi2019,BenkoGfrererMordukhovich2019,
NguyenDissertation2020}. Delivering quantitative Lipschitz moduli for
the shadow price $\mu$ would rigorously certify the operational-reliability
claims made in the present paper and connect the HSDI framework to the
mainstream stability theory of nonpolyhedral conic programs.

\bibliographystyle{plainnat}

\begin{thebibliography}{50}
\sloppy
\bibitem[Akkaya and P{\i}nar(2025)]{AkkayaPinar2025} Akkaya, D. and P{\i}nar, M. {\c C}. (2025). Hidden conic quadratic representation in some generalized Nash equilibrium problems. \emph{Journal of Optimization Theory and Applications}, in submission (manuscript JOTA-D-25-01156).
\bibitem[Almgren and Chriss(2001)]{AlmgrenChriss2001} Almgren, R. and Chriss, N. (2001). Optimal execution of portfolio transactions. \emph{Journal of Risk}, 3(2):5-40.
\bibitem[Banker et al.(1984)]{BankerCharnesCooper1984} Banker, R. D., Charnes, A. and Cooper, W. W. (1984). Some models for estimating technical and scale inefficiencies in data envelopment analysis. \emph{Management Science}, 30(9):1078-1092.
\bibitem[Bao et al.(2024)]{BaoMordukhovichNam2024} Bao, N. X. D., Mordukhovich, B. S. and Nam, N. M. (2024). Nash equilibrium and minimax theorems via variational tools of convex analysis. arXiv:2408.14433 [math.OC].
\bibitem[Benko et al.(2019)]{BenkoGfrererMordukhovich2019} Benko, M., Gfrerer, H. and Mordukhovich, B. S. (2019). Characterizations of tilt-stable minimizers in second-order cone programming. \emph{SIAM Journal on Optimization}, 29(4):3100-3130.
\bibitem[Beck and Pan(2017)]{BeckPan2017} Beck, A. and Pan, D. (2017). A branch and bound algorithm for nonconvex quadratic optimization with ball and linear constraints. \emph{Journal of Global Optimization}, 69(2):309-342.
\bibitem[Ben-Tal and Nemirovski(2001)]{BenTalNemirovski2001} Ben-Tal, A. and Nemirovski, A. (2001). \emph{Lectures on Modern Convex Optimization}. SIAM.
\bibitem[Charnes, Cooper and Rhodes(1978)]{CCR1978} Charnes, A., Cooper, W. W. and Rhodes, E. (1978). Measuring the efficiency of decision making units. \emph{EJOR}, 2(6):429-444.
\bibitem[Cook and Seiford(2009)]{CookSeiford2009} Cook, W. D. and Seiford, L. M. (2009). Data envelopment analysis (DEA)-Thirty years on. \emph{European Journal of Operational Research}, 192(1):1-17.
\bibitem[Crouzeix and Ferland(1991)]{CrouzeixFerland1991} Crouzeix, J.-P. and Ferland, J. A. (1991). Algorithms for generalized fractional programming. \emph{Mathematical Programming}, 52(1-3):191-207.
\bibitem[Dembo et al.(1982)]{DemboEisenstatSteihaug1982} Dembo, R. S., Eisenstat, S. C. and Steihaug, T. (1982). Inexact Newton methods. \emph{SIAM Journal on Numerical Analysis}, 19(2):400-408.
\bibitem[Dinkelbach(1967)]{Dinkelbach1967} Dinkelbach, W. (1967). On nonlinear fractional programming. \emph{Management Science}, 13(7):492-498.
\bibitem[Endara et al.(2023)]{EndaraWiedmannAdalja2023} Endara, P., Wiedmann, M. and Adalja, A. (2023). Consumer willingness to pay for shelf life of high-temperature, short-time-pasteurized fluid milk. \emph{Journal of Dairy Science}, 106(9):5940-5957.
\bibitem[Facchinei and Kanzow(2010)]{FacchineiKanzow2010} Facchinei, F. and Kanzow, C. (2010). Generalized Nash equilibrium problems. \emph{Annals of Operations Research}, 175(1):177-211.
\bibitem[Facchinei and Pang(2003)]{FacchineiPang2003} Facchinei, F. and Pang, J.-S. (2003). \emph{Finite-Dimensional Variational Inequalities and Complementarity Problems}. Springer.
\bibitem[Fan et al.(2008)]{FanFanLv2008} Fan, J., Fan, Y. and Lv, J. (2008). High-dimensional covariance matrix estimation using a factor model. \emph{Journal of Econometrics}, 147(1):186-197.
\bibitem[FiinGroup(2025)]{FiinGroup2025} FiinGroup. \emph{Vietnam Cold Chain Market Report 2025}.
\bibitem[GCCA(2020)]{GCCA2020} Global Cold Chain Alliance. \emph{Global Cold Storage Capacity Report 2020}.
\bibitem[GCCA(2024)]{GCCA2024} Global Cold Chain Alliance. \emph{Global Top 25 List of Refrigerated Warehousing Providers 2024}.
\bibitem[Hang et al.(2019)]{HangMordukhovichSarabi2019} Hang, N. T. V., Mordukhovich, B. S. and Sarabi, M. E. (2019). Second-order variational analysis in second-order cone programming. \emph{Mathematical Programming, Series A}, 180(1-2):75-116.
\bibitem[Horn and Johnson(2012)]{HornJohnson2012} Horn, R. A. and Johnson, C. R. (2012). \emph{Matrix Analysis}, 2nd ed. CUP.
\bibitem[Iancu and Trichakis(2014)]{IancuTrichakis2014} Iancu, D. A. and Trichakis, N. (2014). Pareto efficiency in robust optimization. \emph{Management Science}, 60(1):130-147.
\bibitem[Ibaraki(1983)]{Ibaraki1983} Ibaraki, T. (1983). Parametric approaches to fractional programs. \emph{Mathematical Programming}, 26(3):345-362.
\bibitem[IMARC(2024)]{IMARC2024} IMARC Group. \emph{Vietnam Dairy Market Report 2024-2033}.
\bibitem[Korpelevich(1976)]{Korpelevich1976} Korpelevich, G. M. (1976). The extragradient method for finding saddle points and other problems. \emph{Ekonomika i Matematicheskie Metody}, 12:747-756.
\bibitem[Kulkarni and Shanbhag(2012)]{KulkarniShanbhag2012} Kulkarni, A. A. and Shanbhag, U. V. (2012). On the variational equilibrium as a refinement of the generalized Nash equilibrium. \emph{Automatica}, 48(1):45-55.
\bibitem[Lobo et al.(1998)]{Lobo1998} Lobo, M. S., Vandenberghe, L., Boyd, S. and Lebret, H. (1998). Applications of second-order cone programming. \emph{Linear Algebra Appl.}, 284(1-3):193-228.
\bibitem[Mordukhovich(2024)]{Mordukhovich2024} Mordukhovich, B. S. (2024). \emph{Second-Order Variational Analysis in Optimization, Variational Stability, and Control: Theory, Algorithms, Applications}. Springer Series in Operations Research and Financial Engineering. Springer, Cham. \url{https://doi.org/10.1007/978-3-031-53476-8}.
\bibitem[Mordukhovich et al.(2015a)]{MordukhovichOutrataRamirez2015a} Mordukhovich, B. S., Outrata, J. V. and Ram\'irez C., H. (2015a). Graphical derivatives and stability analysis for parameterized equilibria with conic constraints. \emph{Set-Valued and Variational Analysis}, 23(4):687-704.
\bibitem[Mordukhovich et al.(2015b)]{MordukhovichOutrataRamirez2015b} Mordukhovich, B. S., Outrata, J. V. and Ram\'irez C., H. (2015b). Second-order variational analysis in conic programming with applications to optimality and stability. \emph{SIAM Journal on Optimization}, 25(1):76-101.
\bibitem[Nagurney et al.(2017)]{NagurneyYuBesik2017} Nagurney, A., Yu, M. and Besik, D. (2017). Supply chain network capacity competition with outsourcing. \emph{JOGO}, 69(1):231-254.
\bibitem[Nesterov and Todd(1997)]{NesterovTodd1997} Nesterov, Y. and Todd, M. J. (1997). Self-scaled barriers and interior-point methods. \emph{Math. Oper. Res.}, 22(1):1-42.
\bibitem[Nguyen(2020)]{NguyenDissertation2020} Nguyen, H. T. V. (2020). \emph{Variational Analysis in Second-Order Cone Programming and Applications}. PhD dissertation, Wayne State University, Detroit, MI.
\bibitem[P{\'o}lik and Terlaky(2007)]{PolikTerlaky2007} P\'olik, I. and Terlaky, T. (2007). A survey of the S-lemma. \emph{SIAM Review}, 49(3):371-418.
\bibitem[Pataki(2013)]{Pataki2013} Pataki, G. (2013). Strong duality in conic linear programming. \emph{Springer Proc.\ Math.\ Stat.}, 50:613-634.
\bibitem[Ramachandran et al.(2024)]{Ramachandran2024} Ramachandran, A., Shu, K. and Wang, A. L. (2024). Hidden convexity, optimization, and algorithms on rotation matrices. \emph{Math. Oper. Res.}, forthcoming.
\bibitem[Rosen(1965)]{Rosen1965} Rosen, J. B. (1965). Existence and uniqueness of equilibrium points for concave $n$-person games. \emph{Econometrica}, 33(3):520-534.
\bibitem[Sahinidis and Grossmann(1991)]{SahinidisGrossmann1991} Sahinidis, N. V. and Grossmann, I. E. (1991). Convergence properties of generalized Benders decomposition. \emph{Comput. Chem. Eng.}, 15(7):481-491.
\bibitem[Schaible(1976)]{Schaible1976} Schaible, S. (1976). Fractional programming I, duality. \emph{Management Science}, 22(8):858-867.
\bibitem[Sharpe(1963)]{Sharpe1963} Sharpe, W. F. (1963). A simplified model for portfolio analysis. \emph{Management Science}, 9(2):277-293.
\bibitem[Tseng(1995)]{Tseng1995} Tseng, P. (1995). On linear convergence of iterative methods for the variational inequality problem. \emph{Journal of Computational and Applied Mathematics}, 60(1-2):237-252.
\bibitem[Tsiros and Heilman(2005)]{TsirosHeilman2005} Tsiros, M. and Heilman, C. M. (2005). The effect of expiration dates on purchasing behavior. \emph{Journal of Marketing}, 69(2):114-129.
\bibitem[USDA-AMS(2024)]{USDA_AMS2024} USDA Agricultural Marketing Service. \emph{Retail Milk Prices Report, November 2024}.
\bibitem[USDA-ERS(2024)]{USDA_ERS2024} USDA Economic Research Service. \emph{Dairy Market Overview 2024}.
\bibitem[USDA-FAS(2024)]{USDAFAS2024} USDA Foreign Agricultural Service. \emph{Vietnam Retail Foods Annual}, GAIN VM2024-0032.
\bibitem[van Boekel(2022)]{vanBoekel2022} van Boekel, M. A. J. S. (2022). Kinetics of heat-induced changes in dairy products. \emph{International Dairy Journal}, 126:105175.
\bibitem[Veritiv(2024)]{Veritiv2024} Veritiv Corporation. \emph{Cold Chain Best Practices Benchmark 2024}.
\bibitem[Vinamilk(2024)]{Vinamilk2024} Vinamilk JSC. \emph{Annual Report 2024}.
\bibitem[Vu and Doan(2024)]{VuDoan2024} Vu, L. H. and Doan, T. H. (2024). Effects of logistics performance on Vietnam's exports: a quantitative analysis using the PPML method. \emph{Journal of International Economics and Management}, 24(1):1-16. \url{https://doi.org/10.38203/jiem.024.1.0077}.
\bibitem[Xia, Wang and Sheu(2015)]{XiaWangSheu2015} Xia, Y., Wang, S. and Sheu, R.-L. (2015). S-lemma with equality. \emph{Math. Program.}, 156(1):513-547.
\end{thebibliography}

\appendix
\section{Detailed proofs}\label{app:proofs}

\subsection*{A.1 \quad Full proof of Theorem~\ref{thm:joint-diag}}

\emph{(i) Cholesky.} Under Assumption~\ref{ass:reg}, $H_i\in\PD^{d}$,
so there exists a unique $L_i\in\R^{d\times d}$ lower triangular with
positive diagonal such that $H_i=L_iL_i^{\top}$.

\emph{(ii) PSD of $S_i$.} Let $S_i=L_i^{-1}\Sigma_iL_i^{-\top}$.
Symmetry: $S_i^{\top}=L_i^{-1}\Sigma_i^{\top}L_i^{-\top}=S_i$. For
$z\in\R^{d}$, $z^{\top}S_iz=(L_i^{-\top}z)^{\top}\Sigma_i(L_i^{-\top}z)
\ge 0$ since $\Sigma_i\in\PSD^{d}$.

\emph{(iii) Spectral.} By the spectral theorem,
$S_i=U_i\Lambda_iU_i^{\top}$ with $U_i$ orthogonal and
$\lambda_{i,j}\ge 0$.

\emph{(iv) The map $T_i$.} Set $T_i=L_i^{-\top}U_i$, invertible with
$T_i^{-1}=U_i^{\top}L_i^{\top}$.

\emph{(v)-(vi) The two quadratic identities} follow directly:
$x_i^{\top}H_ix_i=\|L_i^{\top}x_i\|_{2}^{2}=\|y_i\|_{2}^{2}$ and
$x_i^{\top}\Sigma_ix_i=y_i^{\top}\Lambda_iy_i=\sum_j\lambda_{i,j}y_{i,j}^{2}$.
$\square$

\subsection*{A.2 \quad Full proof of Theorem~\ref{thm:socp-exact}}

\emph{(a)} If $\lambda_{i,j}\ge\rho$ for all $j$, every coefficient
$\lambda_{i,j}-\rho$ in the SOCP objective is non-negative, so the
optimisation drives each $z_{i,j}$ down to its lower bound
$y_{i,j}^{2}$. The SOCP value at optimum then equals the
Dinkelbach inner value at $x_i^{\star}=T_iy_i^{\star}$.

\emph{(b)} Define $R_{T_i}=\max_{x\in\mathcal{X}_i}\|T_i^{-1}x\|_{\infty}
<\infty$ (compactness and continuity of $T_i^{-1}$). For
$j\in\mathcal{J}_{-}(\rho)$, the coefficient $\lambda_{i,j}-\rho<0$
pushes $z_{i,j}$ upward subject to $z_{i,j}\ge y_{i,j}^{2}$ and the
implicit bound $|y_{i,j}|\le R_{T_i}$, so $z_{i,j}^{\star}\le R_{T_i}^{2}$.
The gap is bounded by
$\sum_{j\in\mathcal{J}_{-}}|\lambda_{i,j}-\rho|(z_{i,j}^{\star}-
y_{i,j}^{\star 2})\le R_{T_i}^{2}\sum_{j\in\mathcal{J}_{-}}|\lambda_{i,j}-\rho|$. $\square$

\subsection*{A.3 \quad Proof of Proposition~\ref{prop:ve-exists}}

Each $\varphi_i$ is a ratio of two quadratics with positive denominator
on $\mathcal{C}$, hence smooth. $\mathcal{C}$ is the intersection of
the polyhedron $\prod_i\mathcal{X}_i$ with the sublevel set of the
convex function $g$, hence convex and compact.
\citet[Thm.~2.3.3]{FacchineiPang2003} then gives the existence of a
VE. Strict monotonicity of $F$ implies uniqueness by the standard
contradiction with~\eqref{eq:vi}. $\square$

\subsection*{A.4 \quad Proof of Lemma~\ref{lem:psi-reg} and Theorem~\ref{thm:dink-rate}}

\emph{Lemma~\ref{lem:psi-reg}.} For fixed $x$, $\rho\mapsto
N_i(x)-\rho D_i(x)$ is affine and decreasing since $D_i(x)>0$; the
pointwise infimum $\psi_i$ is therefore concave and non-increasing, and
strictly decreasing because $D_i\ge\gamma_i>0$. Lipschitz continuity on
a bounded interval follows from $|\psi_i(\rho)-\psi_i(\rho')|\le
\bar D_i|\rho-\rho'|$ with $\bar D_i=\max_{x\in\mathcal{X}_i}D_i(x)<
\infty$. Where the minimiser $x_i(\rho)$ is unique, Danskin's theorem
gives $\psi_i'(\rho)=-D_i(x_i(\rho))\in[-\bar D_i,-\gamma_i]$;
non-uniqueness occurs only at $\rho$ with $\det M_i(\rho)=0$, a finite
set. Hence $\psi_i$ is $C^1$ and strictly decreasing near
$\rho_i^{\star}$ with a Lipschitz inverse. $\square$

\emph{Theorem~\ref{thm:dink-rate}.} (a) The Dinkelbach update solves
$\psi_i(\rho_k)+\psi_i'(\rho_k)(\rho_{k+1}-\rho_k)=0$ with
$\psi_i'(\rho_k)=-D_i(x_k)$, i.e.
$\rho_{k+1}=\rho_k-\psi_i(\rho_k)/\psi_i'(\rho_k)=N_i(x_k)/D_i(x_k)$,
which is Newton's method for the concave decreasing $\psi_i$; Newton on
a concave function started above the root produces a monotone
decreasing sequence converging to $\rho_i^{\star}$
\citep{CrouzeixFerland1991,Ibaraki1983}. (b) Since $\psi_i$ is $C^1$
with $\psi_i'(\rho_i^{\star})\le-\gamma_i<0$, Newton converges
$Q$-superlinearly; if $\psi_i'$ is locally Lipschitz with modulus
$\ell$, the standard Newton estimate gives
$\rho_{k+1}-\rho_i^{\star}\le(\ell/2\gamma_i)(\rho_k-\rho_i^{\star})^2$,
so $C=\ell/2\gamma_i$. (c) The Dinkelbach step with the relaxation solves
$\psi_i^{R}(\rho_k)+\psi_i'(\rho_k)(\rho_{k+1}-\rho_k)=0$, which is an
inexact Newton step for $\psi_i(\rho)=0$ with residual
$\psi_i(\rho_k)-\psi_i^{R}(\rho_k)\ge 0$. Under the relative forcing
bound $\psi_i(\rho_k)-\psi_i^{R}(\rho_k)\le\kappa_i|\psi_i(\rho_k)|$
with $\kappa_i<1$, the inexact-Newton convergence theorem of
\citet{DemboEisenstatSteihaug1982}, applied to the $C^1$ strictly
monotone $\psi_i$ of Lemma~\ref{lem:psi-reg}, gives local convergence
to $\rho_i^{\star}$ with asymptotic linear factor at most $\kappa_i$;
on the exact region $\kappa_i=0$ and part~(b) is recovered. $\square$

\subsection*{A.5 \quad Proof of Proposition~\ref{prop:r-mono} and Theorem~\ref{thm:ve-rate}}

\emph{Proposition~\ref{prop:r-mono}.} For $\mu'>\mu$, optimality of
$x_i(\mu),x_i(\mu')$ for the penalised objectives
$\varphi_i+\mu\,q_i$ and $\varphi_i+\mu'q_i$ with
$q_i(x)=x^{\top}\Sigma_ix$ gives, on adding the two optimality
inequalities, $(\mu'-\mu)\big(q_i(x_i(\mu'))-q_i(x_i(\mu))\big)\le 0$,
so $q_i(x_i(\cdot))$ is non-increasing; summing over $i$ shows $r$ is
non-increasing, strictly so where the responses are interior.
Existence and uniqueness of the binding root follow from continuity and
strict monotonicity. $\square$

\emph{Theorem~\ref{thm:ve-rate}.} (a) With $F$ $m_F$-strongly monotone
and $L_F$-Lipschitz, for $z,z'\in\mathcal{C}$ the projected residual
map $T_\tau(z)=P_{\mathcal{C}}(z-\tau F(z))$ obeys
$\|T_\tau(z)-T_\tau(z')\|^2\le(1-2\tau m_F+\tau^2L_F^2)\|z-z'\|^2$ by
non-expansiveness of $P_{\mathcal{C}}$ and strong monotonicity; the
bracket is $<1$ for $\tau\in(0,2m_F/L_F^2)$ and minimised at
$\tau=m_F/L_F^2$, giving $\eta=\sqrt{1-m_F^2/L_F^2}$
\citep{Tseng1995,FacchineiPang2003}. (b) By
Proposition~\ref{prop:r-mono}, $r$ is continuous and strictly
decreasing on the binding branch, so bisection halves the multiplier
bracket each step, $|\mu_k-\mu^{\star}|\le 2^{-k}|\mu_0-\mu^{\star}|$;
where $r$ is differentiable with $r'(\mu^{\star})<0$, the safeguarded
Newton step is locally $Q$-superlinear. $\square$

\subsection*{A.6 \quad Proof of Theorem~\ref{thm:stability}}

The KKT system of~\eqref{eq:socp-relax} together with the scalar
complementarity~\eqref{eq:mu-system} is a parameterised generalised
equation $0\in G(w;p)+N_{\mathcal{K}}(w)$ with $w=(x,\mu)$,
$p=(R,\{b_i\},\{H_i\})$ and $\mathcal{K}$ a product of the nonnegative
orthant and Lorentz cones. Under the metric-subregularity constraint
qualification, the graphical-derivative / coderivative criterion for
isolated calmness of solution maps to conic generalised equations
\citep{MordukhovichOutrataRamirez2015a,MordukhovichOutrataRamirez2015b}
holds; its specialisation to second-order cone systems
\citep{HangMordukhovichSarabi2019,NguyenDissertation2020} and the
strong-regularity calculus of \citet[Ch.~9]{Mordukhovich2024} yield
single-valued Lipschitz dependence of $w=(x^{\star},\mu)$ on $p$ in a
neighbourhood of the reference equilibrium. Restricting the
perturbation to $R$ gives $|\mu(R')-\mu(R)|\le L_\mu|R'-R|$. $\square$

\section{Calibration tables}\label{app:calib}

\begin{table}[H]\centering\small
\caption{Vietnam calibration parameters.}\label{tab:calib-vn}
\resizebox{\linewidth}{!}{%
\begin{tabular}{l l l l}
\toprule
Parameter & Value & Unit & Source \\
\midrule
Retailers $n$ & 5 (WC, BHX, Co.opmart, GO!, LotteMart) & - & \citet{USDAFAS2024} \\
Market size & 5.71 (2024) & USD bn & \citet{IMARC2024} \\
HTST retail price & 1.00-1.67 & USD/L & \citet{USDAFAS2024} \\
UHT retail price & 1.50-2.00 & USD/L & \citet{USDAFAS2024,IMARC2024} \\
Raw milk farm-gate & 0.76 & USD/L & \citet{Vinamilk2024} \\
HTST shelf life & 7-10 & days & \citet{EndaraWiedmannAdalja2023} \\
$\beta_{\mathrm{HTST}}$ & 0.04 & USD/L/day & \citet{EndaraWiedmannAdalja2023} \\
$\beta_{\mathrm{ESL}}$  & 0.02 & USD/L/day & \citet{EndaraWiedmannAdalja2023} \\
$\beta_{\mathrm{UHT}}$  & 0.005 & USD/L/day & \citet{EndaraWiedmannAdalja2023} \\
$\sigma_{\mathrm{VN}}$ (proxy) & 1.0 ([0.5,2.0] sweep) & $^{\circ}$C & \citet{FiinGroup2025} \\
Cold-storage capacity & 4.2 (2024) $\to$ 6.1 (2027) & million pallets & \citet{FiinGroup2025} \\
Risk budget $R$ & $1.2\times 10^{6}$ & USD$^{2}$ & calibrated \\
\bottomrule
\end{tabular}}
\end{table}

\begin{table}[H]\centering\small
\caption{United States calibration parameters.}\label{tab:calib-us}
\resizebox{\linewidth}{!}{%
\begin{tabular}{l l l l}
\toprule
Parameter & Value & Unit & Source \\
\midrule
Retailers $n$ & 5 (Walmart, Kroger, Albertsons, Publix, Costco) & - & \citet{USDA_ERS2024} \\
Market size & $\sim 70$ (2024) & USD bn & \citet{USDA_ERS2024} \\
HTST retail price & 1.00-1.30 & USD/L & \citet{USDA_AMS2024} \\
Raw milk farm-gate & 0.45 & USD/L & \citet{USDA_ERS2024} \\
HTST shelf life & 14-21 & days & \citet{EndaraWiedmannAdalja2023} \\
$\beta_{\mathrm{HTST}}$ & 0.06 & USD/L/day & \citet{EndaraWiedmannAdalja2023} \\
$\beta_{\mathrm{ESL}}$  & 0.03 & USD/L/day & \citet{EndaraWiedmannAdalja2023} \\
$\beta_{\mathrm{UHT}}$  & 0.01 & USD/L/day & \citet{EndaraWiedmannAdalja2023} \\
$\sigma_{\mathrm{US}}$ & 0.3 ([0.2,0.5] sweep) & $^{\circ}$C & \citet{Veritiv2024} \\
Cold-storage capacity & 5.5 (156 million m$^{3}$) & billion ft$^{3}$ & \citet{GCCA2020} \\
Risk budget $R$ & $4.5\times 10^{5}$ & USD$^{2}$ & calibrated \\
\bottomrule
\end{tabular}}
\par\smallskip\begin{minipage}{0.95\linewidth}\footnotesize\textit{Note:}
The Arrhenius coefficients are regime-independent across both
calibrations: $Q_{10,\mathrm{HTST}}=3.0$, $Q_{10,\mathrm{ESL}}=2.5$,
$Q_{10,\mathrm{UHT}}=2.0$ \citep{vanBoekel2022}.
\end{minipage}
\end{table}

\end{document}